\documentclass[12pt,leqno]{article}

\usepackage{amsmath,amssymb,amscd,amsthm}
\usepackage{latexsym,enumerate,comment}
\usepackage{graphicx}
\usepackage{xcolor}
\usepackage{mathrsfs}
\usepackage{mathabx}
\usepackage{authblk}

\usepackage{float}

\theoremstyle{definition}
\newtheorem{thm}{Theorem}[section]
\newtheorem{lm}[thm]{Lemma}
\newtheorem{cor}[thm]{Corollary}
\newtheorem{ex}[thm]{Example}
\newtheorem{rem}[thm]{Remark}
\newtheorem{as}[thm]{Assumption}

\newtheorem{defn}[thm]{Definition}
\newtheorem{prop}[thm]{Proposition}

\newcommand\vp{\varphi}

\renewenvironment{proof}{\vskip 10pt{\noindent\bf Proof.\ }}{\hfill$\blacksquare$\vskip 10pt}

\numberwithin{equation}{section}

\begin{document}

\title{Total Positivity and Spectral Properties of Linearized Operators}
\author{John Albert and Steve Levandosky}
\date{}

\maketitle

\begin{abstract}
For a class of semilinear elliptic equations, we establish criteria that guarantee that the linearized operator associated with a solution satisfies certain spectral assumptions that are widely used in the analysis of the stability of solitary waves. The criteria only involve the symbol of the linear operator and positivity and symmetry of the solution, and can therefore be verified without an explicit formula for the solution. 
\end{abstract}

\section{Introduction}

In this paper we consider linearized operators which arise in stability analysis of traveling-wave solutions of nonlinear wave equations, and study spectral properties of these operators which play an important role in the stability theory.  We will consider here equations of the form
	\begin{equation}\label{E:evolution}
	u_t-(\mathcal{M}u)_x+(f(u))_x=0\qquad x\in\mathbb R, t\geq0
	\end{equation}
where $f: \mathbb R \to \mathbb R$ and $\mathcal{M}$ is a linear operator. Equations of this form arise as models for waves in dispersive media.  For example, the equation corresponding to the choice $\mathcal{M}=-\partial_x^2$ is the well-known KdV equation, while  $\mathcal{M}=\partial_x^4-b\partial_x^2$ corresponds to the Kawahara equation \cite{B:kawahara}.  In such settings one is often interested in the stability of traveling waves. We define a traveling-wave solution of \eqref{E:evolution} to be a solution of the form $u(x,t)=\vp(x-ct)$, where $\vp \in L^2(\mathbb R)$ is called the wave profile. Thus $\vp$ is a traveling-wave profile if and only if it is a solution in $L^2(\mathbb R)$ of the equation 
\begin{equation}
\label{E:traveling}
	\mathcal{L}\vp=f(\vp),
\end{equation}
where $\mathcal{L}=\mathcal{M}+c$.

Existence of solutions of \eqref{E:traveling} has been proven under a variety of assumptions on $\mathcal{L}$ and $f$. For the purposes of this paper we restrict attention to operators $\mathcal{L}$ defined by 
\begin{equation}
\mathcal{L}v=\mathcal{F}^{-1}\left[\alpha(\xi)\cdot\mathcal{F}v(\xi)\right],
\label{defL}
\end{equation}
where we make the following assumption on $\alpha(\xi)$.

\begin{as}\label{A:alpha_assumption} 
The function $\alpha(\xi)$ is measurable on $\mathbb{R}$, and there exist $c_1 >0$, $c_2>0$ and $s>\frac12$ such that 
	\begin{equation}
    \label{E:L}
	    \qquad c_1(1+|\xi|^{2s})\leq \alpha(\xi)\leq c_2(1+|\xi|^{2s}) \quad \text{for all $\xi$ in $\mathbb R$}.
	\end{equation}
\end{as}

This assumption guarantees in particular that $\mathcal{L}$ is a bounded operator from $H^q(\mathbb{R})$ to $H^{q-2s}(\mathbb{R})$, for every $q \in (-\infty,\infty)$.
 
%This is equivalent to the assumption that $m(\xi)$ is locally integrable and bounded below, with $c > -\inf_{\xi \in \mathbb R} m(\xi)$, and satisfies $m_1 |\xi|^{2s}\le |m(\xi)| \le m_2|\xi|^{2s}$ for all sufficiently large $\xi \in \mathbb R$, with $m_1, m_2 > 0$.

Concerning the nonlinear term in \eqref{E:evolution}, we make the following assumption throughout.

\begin{as}\label{A:f_assumption}  The function $f(r)$ is twice continuously differentiable on $\mathbb{R}$, and satisfies 

\begin{enumerate}
\item[(F1)] $f(0)=0$ and $f(r)>0$ for all $r>0$,
\item[(F2)] there exists $p>1$ such that $r^2f'(r)\geq prf(r)$ for all $r\in\mathbb R$,
\item[(F3)] there exist $2\leq q_1\leq q_2$ and $C>0$ such that $|f'(r)|\leq C(|r|^{q_1-1}+|r|^{q_2-1})$ for all $r\in\mathbb R$.
\end{enumerate}

\end{as}

Solutions of \eqref{E:traveling} in $H^s(\mathbb R)$ are critical points of the action functional 
	\[
	S(u):=\int_{\mathbb R} {\textstyle\frac12}u\mathcal{L}u-F(u)\,dx
	\]
where $F(u):=\int_0^u f(t)\,dt$.   One possible strategy for proving existence of such critical points is to use the method of concentration compactness to show that $S$, when constrained to the Nehari manifold 
\[
	\mathcal{N}:=\{u\in X: u\neq0, \left<S'(u),u\right>=0\},
	\]
has a nontrivial minimizer.  This was accomplished in \cite{B:esfahani_levandosky2021} in  the case where $\mathcal{L}$ is a fourth-order differential operator, and the method of proof used there generalizes to arbitrary $\mathcal{L}$ and $f$ satisfying the assumptions above, yielding the following result.

\begin{thm}\label{T:existence} Let $\mathcal{L}$ have symbol $\alpha$ satisfying Assumption \ref{A:alpha_assumption} for some $s>\frac12$, and let $f$ satisfy Assumption \ref{A:f_assumption}. Then there exists a nontrivial solution $\vp\in H^s(\mathbb R)$  of \eqref{E:traveling}.
\end{thm}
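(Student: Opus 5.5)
We minimize $S$ over the Nehari manifold $\mathcal{N}$ in $X=H^s(\mathbb R)$ and use concentration compactness to obtain a minimizer. The minimizer will be a critical point of $S$, hence a weak solution of \eqref{E:traveling}.

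\textbf{Step 1: the functional setting.} By Assumption \ref{A:alpha_assumption}, the quadratic form $\|u\|_{\mathcal L}^2:=\int u\mathcal{L}u\,dx=\int\alpha(\xi)|\hat u(\xi)|^2\,d\xi$ is equivalent to $\|u\|_{H^s}^2$. Since $s>\frac12$, $H^s(\mathbb R)\hookrightarrow L^\infty\cap L^q$ for every $q\ge2$. Condition (F3) then gives $|F(u)|\le C(|u|^{q_1+1}+|u|^{q_2+1})$ and $|f(u)u|\le C(|u|^{q_1+1}+|u|^{q_2+1})$, so $S\in C^1(X)$. On $\mathcal N$ we have $\|u\|_{\mathcal L}^2=\int uf(u)\,dx$.

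\textbf{Step 2: properties of $\mathcal N$.} Integrating (F2) shows that $rf(r)\ge (p+1)F(r)\ge0$ for all $r$, and that $t\mapsto f(tr)/t^{p}$ is nondecreasing for $t>0$. Consequently, for each $u$ with $\int uf(u)>0$, the function $t\mapsto\langle S'(tu),tu\rangle$ has a unique positive zero, so that $tu\in\mathcal N$. On $\mathcal N$,
\[
S(u)=\int\tfrac12 uf(u)-F(u)\ge\left(\tfrac12-\tfrac1{p+1}\right)\|u\|_{\mathcal L}^2 .
\]
The bound $\|u\|_{\mathcal L}^2=\int uf(u)\le C(\|u\|^{q_1+1}+\|u\|^{q_2+1})$ with $q_1\ge2$ gives $\|u\|_{\mathcal L}\ge\delta>0$ on $\mathcal N$. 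Hence $m:=\inf_{\mathcal N}S>0$, and every minimizing sequence is bounded in $H^s$. The set $\mathcal N$ is nonempty: for $u>0$ we have $\int uf(u)>0$ by (F1), so a suitable multiple $tu$ lies in $\mathcal N$.

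\textbf{Step 3: concentration compactness; this is the main obstacle.} Take a minimizing sequence $u_n$. By Step 2, $\int u_nf(u_n)=\|u_n\|^2_{\mathcal L}\ge\delta^2$, so vanishing cannot occur. Indeed, Lions' lemma, adapted to $H^s$, would otherwise give $u_n\to0$ in $L^q$ for $2<q<\infty$, forcing $\int u_nf(u_n)\to0$. After translation, $u_n\rightharpoonup u\neq0$ weakly in $H^s$.

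To exclude dichotomy we use a Brezis--Lieb splitting. Write $v_n=u_n-u$. Then
\[
\|u_n\|_{\mathcal L}^2=\|u\|_{\mathcal L}^2+\|v_n\|_{\mathcal L}^2+o(1),\qquad \int u_nf(u_n)=\int uf(u)+\int v_nf(v_n)+o(1),
\]
and the analogous splitting holds for $\int F$. The second splitting requires local compactness, namely $H^s\hookrightarrow L^q_{loc}$ compactly, together with the $C^2$ regularity of $f$. This is where nonlocality of $\mathcal L$ costs nothing, because the quadratic term splits by weak convergence alone.

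Set $I(w)=\|w\|^2_{\mathcal L}-\int wf(w)$, so that $I(u)+I(v_n)\to0$. Suppose $I(u)>0$. Then $I(v_n)<0$ for large $n$, so projecting $v_n$ onto $\mathcal N$ requires $t_n<1$. Monotonicity in (F2) then gives
\[
m\le S(t_nv_n)\le\int\left(\tfrac12v_nf(v_n)-F(v_n)\right).
\]
Combined with the splitting of $S$ in its Nehari form, this yields the contradiction $m\ge m+\left(\tfrac12-\tfrac1{p+1}\right)\|u\|^2$. Hence $I(u)\le0$. The same monotonicity shows that $t_uu\in\mathcal N$ with $t_u\le1$, and weak lower semicontinuity then gives $S(t_uu)\le\liminf\int\left(\tfrac12u_nf(u_n)-F(u_n)\right)=m$. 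So $t_uu$ is a minimizer.

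\textbf{Step 4: Lagrange multiplier.} Let $G(u)=\langle S'(u),u\rangle$. At the minimizer $\vp$, $S'(\vp)=\lambda G'(\vp)$. Pairing with $\vp$ gives
\[
0=\lambda\langle G'(\vp),\vp\rangle=\lambda\int\bigl(2\vp f(\vp)-\vp^2f'(\vp)-\vp f(\vp)\bigr).
\]
By (F2) the integral is at most $(1-p)\int\vp f(\vp)<0$, so $\lambda=0$. Thus $S'(\vp)=0$, that is, $\vp\in H^s$ is a nontrivial solution of \eqref{E:traveling}.
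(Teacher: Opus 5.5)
Your overall strategy is the route the paper points to: minimize $S$ on the Nehari manifold, rule out vanishing and dichotomy, then kill the Lagrange multiplier with (F2). The paper does not write this out and only invokes the argument of Esfahani--Levandosky. The conclusions of your Steps 2 and 4 are correct. Step 3, however, rests on a false sign claim.

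\textbf{The sign claim.} From (F2) you get $rf(r)\ge(p+1)F(r)$ for all $r$, but not $(p+1)F(r)\ge0$. The KdV nonlinearity $f(r)=r^2$ (with $p=2$), or $f(r)=|r|^p$ with $p\ge2$, satisfies (F1)--(F3), yet $F(r)<0$ and $rf(r)<0$ for $r<0$. Relatedly, it is $t\mapsto rf(tr)/t^p$, not $f(tr)/t^p$, that is nondecreasing. So the integrand $\Psi(r):=\frac12rf(r)-F(r)$ of your ``Nehari form'' can be negative; for $f(r)=r^2$ it equals $r^3/6$.

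\textbf{Where Step 3 breaks.}
\begin{enumerate}
\item[(a)] In the case $I(u)>0$ you bound $\int\Psi(u)$ below by $(\frac12-\frac1{p+1})\|u\|_{\mathcal L}^2$. That bound used $\int uf(u)=\|u\|_{\mathcal L}^2$, which holds only on $\mathcal N$. Here $\int uf(u)<\|u\|_{\mathcal L}^2$, and it may even be $\le0$ (take $f(r)=r^2$ and any $u$ with $\int u^3\le0$). Splitting $\int\Psi(u_n)$ then yields only $\int\Psi(u)\le0$, which is no contradiction. This step fails even if $F\ge0$ were true.
\item[(b)] The final ``weak lower semicontinuity'' of $\int\Psi$ is Fatou's lemma, which requires $\Psi\ge0$.
\end{enumerate}

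\textbf{The repair.} Replace $\int\Psi$ throughout by
\[
J(w):=S(w)-\tfrac1{p+1}\langle S'(w),w\rangle=\left(\tfrac12-\tfrac1{p+1}\right)\|w\|_{\mathcal L}^2+\int\Theta(w)\,dx,\qquad \Theta(r):=\tfrac1{p+1}rf(r)-F(r).
\]
By (F2),
\[
\frac{d}{dt}\Theta(tr)=\frac{1}{(p+1)t}\bigl(s^2f'(s)-psf(s)\bigr)\Big|_{s=tr}\ge0 .
\]
This gives the properties you need:
\begin{itemize}
\item $\Theta\ge0$, since $\Theta(0)=0$.
\item $t\mapsto J(tw)$ is nondecreasing.
\item $J=S$ on $\mathcal N$.
\item $J$ splits under your Brezis--Lieb decomposition.
\item $J$ is weakly lower semicontinuous, by lower semicontinuity of the norm plus Fatou.
\end{itemize}

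Now both cases close. If $I(u)>0$, then with $t_n<1$,
\[
m+o(1)=J(u)+J(v_n)\ge J(u)+J(t_nv_n)\ge J(u)+m .
\]
Since $J(u)\ge(\frac12-\frac1{p+1})\|u\|_{\mathcal L}^2>0$, this is exactly the contradiction you wanted. If $I(u)\le0$, then $t_u\le1$ and
\[
m\le S(t_uu)=J(t_uu)\le J(u)\le\liminf J(u_n)=m .
\]
With this substitution the rest of your argument goes through.
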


We say a traveling-wave profile $\vp$ is {\it stable} in a subspace $X$ of $L^2(\mathbb R)$ if $\vp \in X$ and for any $\epsilon>0$ there exists $\delta>0$ such that for any $u_0\in X$ satisfying $\|u_0-\vp\|_X<\delta$, the solution $u(t)$ of \eqref{E:evolution} satisfies $\inf\{\|u(t)-\vp(\cdot-r)\|_X:r\in\mathbb R\}<\epsilon$ for all $t>0$. We say $\vp$ is {\it unstable} in $X$ otherwise. There is an extensive literature on the  stability of traveling waves of \eqref{E:evolution} and related equations (see for example \cite{B:angulo, B:geyer_pelinovsky, B:kapitula_promislow, B:kowalcyk_martel_munoz} and the references therein), much of which is devoted to proving general conditions implying stability or instability.

Central to the stability analysis of traveling waves is the linearized operator $\mathcal{H}$ associated with a solution $\vp$, defined by
	\begin{equation}\label{E:H_definition}
	\mathcal{H}v:=\mathcal{L}v-f'(\vp)v.
	\end{equation}
One easily sees by differentiating \eqref{E:traveling} with respect to $x$ that $\vp'\in\ker\mathcal{H}$. It is also typically the case that $\mathcal{H}$ has at least one negative eigenvalue. (For the class of equations considered here this follows from \eqref{E:Hphiphi}). A common assumption in the literature is that these eigenspaces are minimal. That is, it is assumed that $\mathcal{H}$ satisfies the following hypotheses.
 
\begin{enumerate}
\item[(H1)] $\mathcal{H}$ has exactly one negative eigenvalue, which is simple.
\item[(H2)] $\ker\mathcal{H}=\mathrm{span}\{\vp'\}$.
\item[(H3)] the rest of the spectrum of $\mathcal{H}$ is positive and bounded below.
\end{enumerate}
 
For example, these hypotheses appear prominently in the abstract framework of Grillakis, Shatah and Strauss \cite{B:grillakis_shatah_strauss}. Using a modification of this framework, Bona, Souganidis and Strauss \cite{B:bona_souganidis_strauss} proved that, for a class of operators $\mathcal{M}$ and nonlinear terms $f$, if $\vp_c$ is a family of traveling-wave solutions of \eqref{E:evolution} depending smoothly on the wavespeed $c$, then  $\vp_c$ is nonlinearly stable when $\frac{d}{dc}\|\vp_c\|_{L^2}^2 >0$ and unstable when $\frac{d}{dc}\|\vp_c\|_{L^2}^2 <0$, provided hypotheses (H1), (H2), and (H3) hold. %Additional stability criteria were obtained in \cite{B:albert}, \cite{B:albert_bona} and \cite{B:albert_bona_henry} under the same assumptions on $\mathcal{H}$.
These hypotheses also appear frequently in the linear stability analysis of solitary waves. See, for example, \cite{B:kapitula} and \cite{B:stanislavova_stefanov}.

\begin{comment}
The following result is well-known. See, for example, \cite{B:albert,B:albert_bona,B:bona_souganidis_strauss,B:grillakis_shatah_strauss}.
\begin{thm}\label{T:stability} Assume \eqref{E:evolution} is locally well-posed in $X$ and that for each $c>0$ there exists a solution $\vp_c \in X$ of \eqref{E:traveling}. Further suppose that, as an operator on $L^2(\mathbb R)$, $\mathcal{H}$ satisfies the following conditions for some $c_0>0$.
\begin{enumerate}
\item[(H1)] $\mathcal{H}$ has exactly one negative eigenvalue, which is simple.
\item[(H2)] $\ker\mathcal{H}=\mathrm{span}\{\frac{d}{dx}\vp_{c_0}\}$.
%\item[(H3)] the rest of the spectrum of $\mathcal{L}$ is positive and bounded below.
\end{enumerate}
Then $\vp_{c_0}$ is stable in $X$ if $\frac{d}{dc}\|\vp_c\|_{L^2}^2>0$ at $c=c_0$, and  $\vp_{c_0}$ is unstable in $X$ if $\frac{d}{dc}\|\vp_c\|_{L^2}^2<0$ at $c=c_0$.
\end{thm}
\end{comment}

We next note that, for the class of operators $\mathcal{L}$ considered here, standard arguments from spectral theory show that we need only concern ourselves with the first two of (H1), (H2), and (H3). 

\begin{thm}\label{T:abh} Suppose Assumptions \ref{A:alpha_assumption} and \ref{A:f_assumption} hold, and let $\varphi$ be as in Theorem \ref{T:existence}.  If the operator $\mathcal{H}$ satisfies hypotheses  (H1) and (H2), then it also satisfies (H3).
\end{thm}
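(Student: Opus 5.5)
The plan is to view $\mathcal{H}=\mathcal{L}-f'(\vp)$ as a relatively compact perturbation of $\mathcal{L}$ and apply Weyl's theorem on the essential spectrum. Then (H3) follows from (H1), (H2) and the discreteness of the spectrum of $\mathcal{H}$ below $\inf\sigma_{ess}(\mathcal{L})$.

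\textbf{Step 1: the essential spectrum of $\mathcal{L}$ and the decay of the potential.} Regard $\mathcal{L}$ as a self-adjoint operator on $L^2(\mathbb R)$ with domain $H^{2s}(\mathbb R)$. Since it is a Fourier multiplier with real symbol $\alpha$, its spectrum is the essential range of $\alpha$. By Assumption \ref{A:alpha_assumption} this range lies in $[c_1,\infty)$, so $\sigma(\mathcal{L})=\sigma_{ess}(\mathcal{L})\subset[c_1,\infty)$. Next consider the potential. Because $s>\frac12$, Sobolev embedding gives $\vp\in H^s\subset C_0(\mathbb R)$, so $\vp$ is bounded, continuous, and tends to $0$ at infinity. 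By (F3), $|f'(\vp)|\le C(|\vp|^{q_1-1}+|\vp|^{q_2-1})$, so $V:=f'(\vp)$ is bounded, continuous, and $V(x)\to0$ as $|x|\to\infty$. Consequently the multiplication operator $V$ is bounded on $L^2$, so $\mathcal{H}$ is self-adjoint on $H^{2s}$.

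\textbf{Step 2: relative compactness.} This is the main point. We show $V(\mathcal{L}+1)^{-1}$ is compact on $L^2$. Write $V=V\chi_{[-R,R]}+V\chi_{|x|>R}$. The second piece has operator norm at most $\sup_{|x|>R}|V|\,\|(\mathcal{L}+1)^{-1}\|\to0$ as $R\to\infty$. For the first piece, $(\mathcal{L}+1)^{-1}$ maps $L^2$ boundedly into $H^{2s}\subset H^{1}_{loc}$-type spaces, since $2s>1>0$. Multiplying by the compactly supported bounded function $V\chi_{[-R,R]}$ and applying Rellich's theorem on $[-R,R]$ gives a compact operator. Equivalently, $\chi_{[-R,R]}(x)\,g(\xi)$ with $g=(\alpha+1)^{-1}\in L^2$ is Hilbert--Schmidt. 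Compact operators are closed under norm limits, so $V(\mathcal{L}+1)^{-1}$ is compact. Weyl's theorem then gives $\sigma_{ess}(\mathcal{H})=\sigma_{ess}(\mathcal{L})\subset[c_1,\infty)$.

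\textbf{Step 3: conclusion.} Every point of $\sigma(\mathcal{H})\cap(-\infty,c_1)$ is therefore an isolated eigenvalue of finite multiplicity, and these eigenvalues can accumulate only at $c_1$. By (H1) there is exactly one negative eigenvalue, and by (H2) the eigenvalue $0$ has eigenspace $\mathrm{span}\{\vp'\}$. Everything else in the spectrum consists of eigenvalues in $(0,c_1)$ together with points of $[c_1,\infty)$. Since eigenvalues below $c_1$ can accumulate only at $c_1$, they cannot accumulate at $0$, so the set $\sigma(\mathcal{H})\cap(0,c_1)$ is bounded below by some $\delta>0$. Hence the remaining spectrum lies in $[\min(\delta,c_1),\infty)$ with $\min(\delta,c_1)>0$, which is (H3).

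The only delicate point is Step 2. There we must be careful to use just that $V$ is bounded and vanishes at infinity; no decay rate is needed. We also need $\vp\in C_0$, which is exactly where $s>\frac12$ enters.
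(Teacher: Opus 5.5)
Your proof is correct and follows the same route as the paper: the paper cites Proposition 1 of Albert--Bona--Henry for the fact that, off the essential spectrum, $\sigma(\mathcal{H})$ consists of isolated eigenvalues that can accumulate only at its bottom, and then concludes (H3) from (H1) and (H2) exactly as your Step 3 does. Your Steps 1--2 prove that cited fact directly, via the decay of $f'(\vp)$, compactness of $f'(\vp)(\mathcal{L}+1)^{-1}$, and Weyl's theorem. Your localization $\sigma_{\mathrm{ess}}(\mathcal{H})=\sigma_{\mathrm{ess}}(\mathcal{L})\subset[c_1,\infty)$ is slightly more careful than the paper's ``$[c,\infty)$'', which implicitly assumes $\alpha$ has the form $m(\xi)+c$ with essential range $[c,\infty)$.
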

 
\begin{proof} As shown in the proof of Proposition 1 of \cite{B:albert_bona_henry}, under the given assumptions the spectrum of $\mathcal{H}$ contains the interval $[c,\infty)$, and outside this interval the spectrum consists only of a set of isolated eigenvalues, and is bounded below.  It follows that the spectrum of $\mathcal{H}$ consists of the interval $[c,\infty)$ together with either a finite or countable number of eigenvalues, whose only possible accumulation point is at $c$. (Note that here we have corrected an inaccuracy in the statement of Proposition 1 in \cite{B:albert_bona_henry}.) The statement of the theorem follows immediately.
\end{proof}

Verification of (H1) and (H2) is often a nontrivial matter.  In his seminal paper \cite{B:benjamin} on the stability of solitary waves, Benjamin established that (H1) and (H2) are satisfied when  $\mathcal{L}=-\partial_x^2+c$, $f(u)=u^2$, and $\varphi$ is a KdV soliton profile.  Later authors studying the nonlinear Schr\"odinger equation (see, e.g., \cite{B:chang, B:weinstein}) have shown that these hypotheses also hold in all dimensions $d \ge 1$, for nonlinearities $f(u)=u^p$, $1< p < \max\{1+4/(n-2), \infty\}$, when $\mathcal{L}=-\Delta+c$ and $\varphi$ is the unique positive radial solution of \eqref{E:traveling},   provided (H2) is replaced by the condition that $\ker\mathcal{H}$ is spanned by $\{\vp_{x_i}\}_{i=1}^d$.  For nonlocal operators $\mathcal{L}$, hypotheses (H1) and (H2) have been verified in some important special cases as well \cite{B:albert_bona, B:bennett_brown_et_al, B:frank_lenzmann, B:frank_lenzmann_silvestre}.  As yet, a general theory is lacking, and the topic remains an active area of research. 

Our main result is related to that which appears in \cite{B:albert,B:albert_bona}, where it was shown that (H1) and (H2) hold when the Fourier transforms of $\vp$ and $f'(\vp)$ satisfy certain positivity conditions. Before giving the precise statement of this result we recall the following terminology.  See \cite{B:karlin}, a standard reference for the theory of totally positive functions and Polya frequency functions.

\begin{defn} 
\label{totalpositivity}
Let $G(x,y)$ be a function defined on $\mathbb R^2$.  Let $\Delta = \{(x_1, x_2) \in \mathbb R^2: x_1 < x_2\}$, and define the function $G_2(\bar x,\bar y)$  on $\Delta\times\Delta$  by
\begin{equation}
	G_2(\bar{x},\bar{y}):=G(x_1,y_1)G(x_2,y_2)-G(x_1,y_2)G(x_2,y_1),
	\label{subscript2function}
\end{equation}
where $\bar x =(x_1,x_2)$ and $\bar y=(y_1, y_2)$.
\begin{enumerate}[(a)]
\item We say that $G$ is {\it totally positive of order 2} ($TP(2)$)   if $G(x,y) \ge 0$ for all $(x,y) \in \mathbb R^2$ and  $G_2(\bar x, \bar y) \ge 0$ for all $(\bar x, \bar y) \in \Delta \times \Delta$. 
\item We say that $G$ is {\it strongly $TP(2)$} if $G$ is $TP(2)$ and, in addition, satisfies $G(x,x)>0$ for all $x \in \mathbb R$ and  $
G_2(\bar x,\bar x) > 0$ for all $\bar x \in \Delta$.
\item We say that $G$ is {\it strictly $TP(2)$} if $G(x,y)>0$ for all  $(x,y) \in \mathbb R^2$ and  $G_2(\bar x, \bar y) > 0$ for all $(\bar x, \bar y) \in \Delta \times \Delta$. 
\item If $K(x)$ is a function defined on $\mathbb R$, and $G(x,y) = K(x-y)$, we say that $K$ is a {\it Polya frequency function of order 2} ($PF(2)$) if $G$ is $TP(2)$, and we say $K$ is {\it strongly (resp.\ strictly) $PF(2)$} if $G$ is strongly (resp.\ strictly) $TP(2)$.
\end{enumerate}
\end{defn}

\begin{rem} Clearly, strictly $PF(2)$ implies strongly $PF(2)$, which in turn implies $PF(2)$.  
\end{rem}

The arguments used to prove Theorems 3 and 4 from \cite{B:albert_bona} and Theorem 3.2 from \cite{B:albert} are easily generalized to obtain the following result.

\begin{thm} \label{T:albert_bona}  Suppose $\alpha$ satisfies Assumption \ref{A:alpha_assumption} and $f$ satisfies Assumption \ref{A:f_assumption}.   Let $\mathcal{L}$ be given by \eqref{defL}, and let $\vp$ be a solution of \eqref{E:traveling}.  If $\alpha$ is even, $\vp$ and $\mathcal{F}\vp$ are positive and even, and $\mathcal{F}(f'(\vp))$ is strictly $PF(2)$,  then $\mathcal{H}$ satisfies the conditions (H1) and (H2). 
\label{thmAB}
\end{thm}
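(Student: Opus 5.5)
The plan is to pass to Fourier space, where $\mathcal{H}$ becomes $\widehat{\mathcal{H}}w=\alpha(\xi)w-K*w$, with $K=\frac{1}{2\pi}\mathcal{F}(f'(\vp))$ (up to the normalization of $\mathcal{F}$). Since $\vp$ is even, $f'(\vp)$ is even and real, so $K$ is real and even. Consider the kernel $G(\xi,\eta)=\alpha(\xi)^{-1/2}K(\xi-\eta)\alpha(\eta)^{-1/2}$, which defines a compact self-adjoint operator $T$ on $L^2$. Compactness comes from the decay of $\alpha^{-1/2}$ under Assumption \ref{A:alpha_assumption}, using $s>\frac12$, together with $f'(\vp)\in L^2\cap L^\infty$, which follows from (F3). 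The key point is that $\lambda<0$ is an eigenvalue of $\mathcal{H}$, and $0$ lies in its kernel, exactly when $T$ has an eigenvalue greater than $1$, respectively equal to $1$, with matching multiplicities. More precisely, by a Birman--Schwinger/min-max argument, the number of negative eigenvalues of $\mathcal{H}$ equals the number of eigenvalues of $T$ exceeding $1$, and $\dim\ker\mathcal{H}$ equals the multiplicity of the eigenvalue $1$ of $T$. Since $\alpha^{-1/2}(\xi)\alpha^{-1/2}(\eta)>0$ and the diagonal factors cancel in the $2\times2$ determinants, $G$ inherits strict $TP(2)$ from $K$.

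Next I use the oscillation theory of compact totally positive kernels (Gantmacher--Krein, as in \cite{B:karlin}), in the form used in \cite{B:albert_bona}. For a strictly $TP(2)$ symmetric kernel, the two largest eigenvalues satisfy $\lambda_0>\lambda_1>|\lambda_j|$ for $j\ge2$, and both are simple. The eigenfunction $\psi_0$ can be taken strictly positive, and $\psi_1$ has exactly one sign change. Two quantitative facts then finish the argument.

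First, the function $w=\alpha^{1/2}\hat\vp$ satisfies $\langle Tw,w\rangle>\|w\|^2$, because $\langle\mathcal{H}\vp,\vp\rangle=\int f(\vp)\vp-f'(\vp)\vp^2<0$ by (F2) and (F1), with $p>1$. This equation is \eqref{E:Hphiphi} in the paper. Hence $\lambda_0>1$.

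Second, $\vp'\in\ker\mathcal{H}$ gives the eigenfunction $\alpha^{1/2}\,i\xi\hat\vp$ of $T$ with eigenvalue $1$. Since $\hat\vp>0$, this function changes sign exactly once, at $\xi=0$. An eigenfunction with exactly one sign change cannot belong to $\lambda_0$, whose eigenfunction is positive. By the oscillation theorem it must then belong to $\lambda_1$: eigenfunctions for $\lambda_j$ with $j\ge2$ cannot have exactly one sign change, or else orthogonality against $\psi_0$ and $\psi_1$ is violated, which is the standard argument. So $\lambda_1=1$, and it is simple. All other eigenvalues satisfy $|\lambda_j|<1$.

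Translating back, $\mathcal{H}$ has exactly one negative eigenvalue, which is simple, and $\ker\mathcal{H}=\mathrm{span}\{\vp'\}$. These are (H1) and (H2).

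The main obstacle is the oscillation step for kernels that are only $TP(2)$ on the continuum. The full Gantmacher--Krein theory needs higher-order total positivity for all sign-change counts. Here I only need the statements about $\lambda_0$ and $\lambda_1$, which require only order $2$. I would derive them directly, following \cite{B:albert}:
\begin{itemize}
\item Positivity of the kernel gives a positive, simple top eigenfunction (Jentzsch/Krein--Rutman).
\item Applying the same argument to the compound kernel $G_2$ on $\Delta\times\Delta$, whose top eigenvalue is $\lambda_0\lambda_1$ with eigenfunction $\psi_0(x_1)\psi_1(x_2)-\psi_0(x_2)\psi_1(x_1)$, gives simplicity of $\lambda_1$, the one-sign-change property, and $\lambda_0\lambda_1>\lambda_0|\lambda_j|$.
\end{itemize}
Care is needed to justify compactness of $G_2$ and strict positivity in the Jentzsch argument; for the latter I would use the strictness hypothesis.
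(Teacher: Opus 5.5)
Your proposal is correct, and its skeleton is the one the paper intends. The paper gives no separate proof of this theorem; it defers to Albert--Bona, whose argument it reproduces in dual (physical-space) form in the proof of Theorem \ref{T:positivity}: a compact symmetric integral operator with positive kernel, Proposition \ref{P:spectral_prop} applied both to it and to the compound kernel $G_2$ on $\Delta$, and the identity $\kappa_0=\lambda_0\lambda_1$. Your symmetrization $\alpha^{-1/2}(\xi)k(\xi-\eta)\alpha^{-1/2}(\eta)$ on unweighted $L^2$, with a Birman--Schwinger count, is unitarily equivalent to the paper's device. The paper keeps a convolution kernel and moves the weight into the measure (here $\alpha(\eta)^{-1}d\eta$, the analogue of $d\mu_1=f'(\vp)\,dx$), then passes through $\mathcal{H}_0$ and Theorem \ref{T:H0H}. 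Since $\alpha$ is only measurable, your kernel need not be continuous, so Proposition \ref{P:spectral_prop} does not apply verbatim. You are right that strict positivity repairs this: for a strictly positive kernel, $T|\psi|>0$ everywhere whenever $|\psi|\not\equiv0$.

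The genuinely different step is the identification $\lambda_1=1$.
\begin{itemize}
\item The paper's route is by parity. Because $\alpha$ and $\mathcal{F}(f'(\vp))$ are even, the operator preserves parity, and $T_2(\psi_1^e\wedge\psi_0)=\kappa_0\,\psi_1^e\wedge\psi_0$ forces $\psi_1^e\equiv0$. So every $\lambda_1$-eigenfunction is odd and vanishes only at $0$. It therefore cannot be orthogonal to the eigenfunction coming from $\vp'$ ($\xi\hat\vp$ times a positive even weight), which has the same property.
\item You use only that this eigenfunction has a single sign change. That needs no parity at all and is the natural Chebyshev-system argument; the paper's version in exchange yields the extra fact that $\psi_1$ is odd.
\end{itemize}

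To make your step airtight, use the full output of the compound-kernel argument, not just ``$\psi_1$ changes sign once''. Since $\psi_0\wedge\psi_1$ is of one sign on $\Delta$, the ratio $\psi_1/\psi_0$ is strictly monotone. Hence some $h=\psi_1-c\,\psi_0$ changes sign exactly at $\xi=0$. Then $h\,\alpha^{1/2}\xi\hat\vp$ is of one sign and not identically zero. That contradicts orthogonality of $\alpha^{1/2}\xi\hat\vp$ to both $\psi_0$ and $\psi_1$, which would hold if $1\notin\{\lambda_0,\lambda_1\}$. If you knew only that $\psi_1$ changes sign once at some point other than $0$, a combination $\psi_1-c\,\psi_0$ could have several sign changes, and the argument would stall.

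Finally, your use of \eqref{E:Hphiphi} to get $\lambda_0>1$ is harmless but redundant once $\lambda_1=1<\lambda_0$.
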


%\begin{thm}\label{T:albert_bona}  (\cite{B:albert,B:albert_bona}) Let $\vp$ be a solution of \eqref{E:traveling}.  Suppose $\alpha$ is positive and even, $\mathcal{F}\vp$ is positive and even, and $\mathcal{F}(f'(\vp))$ is strictly $PF(2)$.  Then $\mathcal{H}$ satisfies Assumption \ref{A:H_assumption}. 
%\label{thmAB}
%\end{thm}

\begin{rem}
\label{R:oldresult}
Using the arguments in the proof of Theorem \ref{T:positivity} below, one can show that it is sufficient to assume that  $\mathcal{F}(f'(\vp))$ is strongly $PF(2)$.
\end{rem}

In cases where an explicit formula for $\vp$ is known, one can often verify directly that $\mathcal F(f'(\vp))$ is in $PF(2)$.  In fact, since it is known that positive functions whose logarithm is concave on $\mathbb R$ are in $PF(2)$ (cf.\ Lemma \ref{L:PF2log} below), verification of this condition amounts to a simple check that the second derivative of $\log \mathcal F(f'(\vp))$ is positive.  See, for example, \cite{B:kabakouala_molinet,B:natali}, where this approach was applied to a family of explicit solutions of the Kawahara equation. There are, however, many situations where one can prove existence of a solution $\vp$ of \eqref{E:traveling}, but an explicit formula is not known \cite{B:esfahani_levandosky2021}. Thus it is desirable to have sufficient conditions for (H1) and (H2) that do not rely on a formula for $\vp$. The purpose of this paper is to present a new set of such conditions, and to illustrate its application to some equations for which no explicit formula for $\varphi$ is available. Our main result is the following.

\begin{thm}\label{T:positivity}    Suppose $\alpha$ satisfies Assumption \ref{A:alpha_assumption} and $f$ satisfies Assumption \ref{A:f_assumption}.  Let $\mathcal{L}$ be given by \eqref{defL}, and let $\vp$ be a solution of \eqref{E:traveling}. If $\alpha$ is even, $\vp$ and $f'(\vp)$ are positive and even, and the kernel $K$ defined by 
\begin{equation}\label{E:K_definition}
    K(x):=\mathcal{F}^{-1}\left(1/\alpha(\xi)\right)
\end{equation}
is strongly $PF(2)$, then $\mathcal{H}$ satisfies the hypotheses (H1) and (H2). 
\end{thm}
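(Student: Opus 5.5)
Rather than work with $\mathcal{H}$ directly, I will pass to the compact integral operator obtained by inverting $\mathcal{L}$. Since $\alpha>0$, the operator $\mathcal{L}$ is invertible and $\mathcal{L}^{-1}v = K*v$, with $K$ as in \eqref{E:K_definition}. Because $1/\alpha \in L^1$ when $s>\frac12$, $K$ is bounded and continuous. Set $w := f'(\vp)$, which is positive, even, and (by (F3) and the decay of $\vp$) bounded and tending to zero at infinity. Consider the operator
\[
\mathcal{T}u := w^{1/2}\,\bigl(K*(w^{1/2}u)\bigr),
\qquad\text{that is,}\qquad
G(x,y)=w(x)^{1/2}K(x-y)w(y)^{1/2}.
\]
This is a self-adjoint operator on $L^2$, and it is compact because $w\to 0$ at infinity. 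By the Birman--Schwinger principle (or a direct min--max argument using $\mathcal{H} = \mathcal{L}^{1/2}(I - \mathcal{L}^{-1/2}w\mathcal{L}^{-1/2})\mathcal{L}^{1/2}$), two facts follow. First, the number of negative eigenvalues of $\mathcal{H}$, counted with multiplicity, equals the number of eigenvalues of $\mathcal{T}$ greater than $1$. Second, $\dim\ker\mathcal{H}$ equals the multiplicity of $1$ as an eigenvalue of $\mathcal{T}$; the correspondence is $v\mapsto u=w^{1/2}v$, since $\mathcal{H}v=0$ is equivalent to $v = K*(wv)$. So (H1) and (H2) reduce to a statement about the top two eigenvalues $\lambda_1\ge\lambda_2\ge\cdots$ of $\mathcal{T}$: we need $\lambda_1>1$, $\lambda_1$ simple, and $\lambda_2=1$ simple with eigenfunction $w^{1/2}\vp'$.

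The kernel $G$ inherits total positivity from $K$. We have $G_2(\bar x,\bar y)=w(x_1)^{1/2}w(x_2)^{1/2}w(y_1)^{1/2}w(y_2)^{1/2}K_2$, so $G$ is strongly $TP(2)$ because $w>0$. The next step is to invoke Gantmacher--Krein/Karlin oscillation theory for compact symmetric integral operators with (strongly) $TP(2)$ kernels, following the arguments behind Theorem \ref{T:albert_bona} in \cite{B:albert,B:albert_bona}. That theory gives: $\lambda_1>\lambda_2$, with a positive eigenfunction; if $\lambda_2>0$ then $\lambda_2$ is simple, and its eigenfunction has exactly one sign change. The strong (rather than strict) $TP(2)$ hypothesis is what makes this step delicate. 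Strong positivity holds only on the diagonal, so I would approximate $G$ by strictly $TP(2)$ kernels, for instance by convolving $K$ with a Gaussian, which is $PF$ of every order, to get $K*g_\epsilon$. I would then pass to the limit using compactness and continuity of the eigenvalues. The limit still needs strictness to keep the eigenvalues simple, and for that I would use $G(x,x)>0$ and $G_2(\bar x,\bar x)>0$, via a Karlin-type argument on the compound kernel $G_2$ acting on $L^2(\Delta)$, whose top eigenvalue is $\lambda_1\lambda_2$. I expect this step to be the main obstacle.

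It remains to identify the eigenvalues. Differentiating \eqref{E:traveling} gives $\mathcal{H}\vp'=0$, so $u_0=w^{1/2}\vp'$ is an eigenfunction of $\mathcal{T}$ with eigenvalue $1$. Since $\vp$ is even and $\vp'$ is odd, $u_0$ changes sign. To see that it has \emph{exactly} one sign change, I would show that $\vp$ is decreasing on $(0,\infty)$. One route: $\vp = K*f(\vp)$, and $K$ is even and $PF(2)$, hence log-concave and unimodal (Lemma \ref{L:PF2log} and its converse). Alternatively, oscillation theory itself forces the eigenfunctions of eigenvalue $1$ to be $\lambda_1$'s or $\lambda_2$'s. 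In either case, since the positive eigenfunction belongs to $\lambda_1$ and $u_0$ is orthogonal to it, $1\ne\lambda_1$. Moreover, (F2) gives
\[
\langle\mathcal{H}\vp,\vp\rangle=\int (f(\vp)-f'(\vp)\vp)\vp \le -(p-1)\int f(\vp)\vp<0,
\]
so $\mathcal{H}$ has a negative eigenvalue and $\lambda_1>1$. An eigenfunction with one sign change must belong to $\lambda_2$, because eigenfunctions for $\lambda_k$ with $k\ge3$ have at least two sign changes. Hence $\lambda_2=1$, and simplicity of $\lambda_1$ and $\lambda_2$ then yields (H1) and (H2).
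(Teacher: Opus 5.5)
Your architecture matches the paper's. Your $\mathcal{T}$ is unitarily equivalent, via $u=w^{1/2}v$, to the paper's $T_1v=K*(f'(\vp)v)$ on $L^2(\mathbb{R},f'(\vp)\,dx)$, and your Birman--Schwinger count legitimately replaces Theorem~\ref{T:H0H}. The compound kernel on $\Delta$ and the role of $\vp'$ are also the same.

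The genuine gap is the step you flag as the main obstacle, and your proposed route does not close it. Approximating by strictly $PF(2)$ kernels $K*g_\epsilon$ gives Karlin's conclusions for each $\epsilon>0$. In the limit, however, positive eigenfunctions converge only to nonnegative ones and simple eigenvalues can coalesce. So at $\epsilon=0$ you still need an argument that uses only $G(x,x)>0$ and $G_2(\bar x,\bar x)>0$, and once you have it the approximation is superfluous.

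The missing idea is Proposition~\ref{P:spectral_prop}, a Perron--Frobenius statement for continuous, symmetric, nonnegative kernels that are positive on the diagonal of a \emph{connected} domain. If $\psi$ is a top eigenfunction, then $|\psi|$ also maximizes the Rayleigh quotient, so it is one too. At a zero $x_*$ one gets $0=\int G(x_*,y)|\psi(y)|\,d\mu(y)$, and since $G(x_*,\cdot)>0$ near $x_*$, $\psi$ vanishes on a neighborhood of $x_*$. The zero set is therefore open and closed, hence empty. This gives a one-signed top eigenfunction and simplicity, and the same $|\psi|$ trick shows every other eigenvalue is strictly smaller in absolute value. The paper applies this directly to $G$ on $\mathbb{R}$ and to $G_2$ on $\Delta$.

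That dominance, which your sketch never addresses, is essential. The compound eigenvalues are all products of two eigenvalues of $\mathcal{T}$ with distinct indices. Dominance, for both $\mathcal{T}$ and the compound operator, is what rules out a product of two negative eigenvalues being the largest. It is therefore what yields that the top compound eigenvalue is $\lambda_1\lambda_2$ in your indexing.

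The identification step has two further problems.

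First, neither of your reasons that $w^{1/2}\vp'$ changes sign exactly once works. By (F2), $f$ is increasing on $(0,\infty)$, so $f(\vp)$ is symmetric-decreasing only if $\vp$ already is. An even unimodal kernel convolved with an arbitrary even positive function need not be unimodal, so the route through log-concavity of $K$ and $\vp=K*f(\vp)$ is circular. Your other route fails because oscillation theory says nothing about \emph{which} eigenvalue equals $1$. What you need is that $\vp$ is strictly decreasing on $(0,\infty)$, for instance via an integral moving-plane argument as in the proof of Theorem~\ref{T:existence_positive_even}. The paper's proof uses the same fact and states it as an immediate consequence of $\vp$ being positive and even. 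So on this point you are not missing something the paper supplies, but your stated justifications are not valid.

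Second, ``eigenfunctions for $\lambda_k$, $k\ge 3$, have at least two sign changes'' is not a $TP(2)$ fact; Gantmacher--Krein use higher-order total positivity for it. With only $TP(2)$ you can argue directly. Let $\psi_1>0$ and $\psi_2$ be eigenfunctions for $\lambda_1$ and $\lambda_2$, and write $\psi_2=r\psi_1$. Nonvanishing of $\psi_2\wedge\psi_1$ on $\Delta$ says exactly that $r$ is strictly monotone. Then any $u$ orthogonal to $\psi_1$ and $\psi_2$ satisfies $\int u\,(r-r(x^*))\,\psi_1\,dx=0$, which is impossible if $u$ changes sign exactly once, at $x^*$. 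Alternatively, follow the paper's parity argument. The compound kernel forces every eigenfunction for the second eigenvalue to be odd and to vanish only at $0$, so it cannot be orthogonal to $\vp'$.
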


%\begin{thm}\label{T:positivity} Let $\vp$ be a solution %of \eqref{E:traveling}. Suppose $\vp$ and $f'(\vp)$ are positive and even, and the kernel $K$ defined by 
%\begin{equation}\label{E:K_definition}
 %   K(x):=\mathcal{F}^{-1}\left(1/\alpha(\xi)\right)
%\end{equation}
%is even and  strongly $PF(2)$. Then $\mathcal{H}$ %\end{thm}

 Theorems \ref{T:albert_bona} and \ref{T:positivity} are in some sense dual to one another. Whereas Theorem \ref{T:albert_bona} assumes that $\mathcal F(f'(\vp))$ is strictly $PF(2)$, Theorem \ref{T:positivity} assumes instead that the kernel $K(x)$ in \eqref{E:K_definition} is strongly $PF(2)$. 
 
The advantage of having the $PF(2)$ requirement be on the kernel $K$ instead of $\mathcal F(f'(\vp))$ is that, for a number of model equations of the form \eqref{E:traveling} which arise in applications, one can easily verify directly that $K$ is strongly $PF(2)$ (see Lemma \ref{L:convolution_exponential_PF2}), although the solution $\vp$ may not be known explicitly.
  Note that it can often be shown that a positive even solution $\varphi$ of \eqref{E:traveling} exists even when an explicit formula for $\varphi$ is not available; as is illustrated for example in Theorem \ref{T:existence_positive_even} below.

 The plan of the paper is as follows.  We prove our main result, Theorem \ref{T:positivity}, in Section \ref{sec:main}.  In Section \ref{sec:class_of_examples} we identify a class of examples to which the main theorem applies, and in Section \ref{sec:numerical} we describe its application to two specific equations in detail, including some numerical results. 

 \vskip 10pt

We conclude this introduction with a couple of comments on our notation. 
 Our definition of the Fourier transform of a function $f$ on the line is   
    \[\mathcal Ff (\xi) = \int_{\mathbb R}e^{ix\xi} f(x)\ dx.
    \]
With this definition, the inverse Fourier transform of $f(k)$ is given by 
    \[\mathcal F^{-1} f(x) = \frac{1}{2\pi}\int_{\mathbb R} e^{-ix\xi}f(\xi)\ d\xi,
    \]
and for $h(\xi)=\mathcal F f(\xi) \mathcal\cdot F g(\xi)$ we have
 $ \mathcal F^{-1} h(x) = (f \ast g) (x)= \int_{\mathbb R} f(x-y)g(y)\ dy$, when the integral is defined.

 For $s \in \mathbb{R}$ we use $H^s(\mathbb{R}^n)$ to denote the Sobolev space of tempered distributions $f$ on $\mathbb{R}^n$  whose Fourier transforms are locally integrable functions satisfying $(1+|\xi|^2)^{s/2}\mathcal{F}f(\xi) \in L^2(\mathbb{R}^n)$.

\vskip 10pt

\section{Proof of the main result} 
\label{sec:main}

In this section we prove Theorem \ref{T:positivity}. We first note that when the operator $\mathcal{L}$ satisfies \eqref{E:L}, then it is invertible as a map from $X=H^s(\mathbb R)$ to its dual $X^*=H^{-s}(\mathbb R)$. Denote by $\left<w,v\right>$ the pairing of $w\in X^*$ with $v\in X$, and note that for $w\in L^2$ this coincides with the $L^2$ inner product $(w,v)_{L^2}$. We may define an inner product on $X$ by
	\[
	(u,v)_X:=\left<\mathcal{L}u,v\right>
	\]
and the associated norm by $\|u\|_X:=\sqrt{(u,u)_X}$. We note that since 
\[\left<S''(u)v,w\right>=\left<\mathcal{L}v-f'(u)v,w\right>\] for any $u,v,w\in X$, $S''(\vp)$ is equivalent to the linearized operator $\mathcal{H}$ defined by \eqref{E:H_definition}. Since $\vp\in L^\infty(\mathbb R)$, $\mathcal{H}$ maps $X$ to $X^*$. It will be useful to consider the related operator $\mathcal{H}_0:X\to X$ defined by
	\begin{equation}\label{H0_definition}
	\mathcal{H}_0v:=\mathcal{L}^{-1}\mathcal{H}v=v-\mathcal{L}^{-1}(f'(\vp)v).
	\end{equation}

We first show that it suffices to prove $\mathcal{H}_0$ satisfies hypotheses (H1) and (H2).

\begin{thm}\label{T:H0H} If  $\mathcal{H}_0$ satisfies hypotheses (H1) and (H2), then so does $\mathcal{H}$.
\end{thm}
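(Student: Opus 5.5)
The plan is to view both operators through the single quadratic form $B(v,w):=\left<\mathcal{H}v,w\right>$ on $X=H^s(\mathbb R)$. The key identity is
\[
(\mathcal{H}_0 v,w)_X=\left<\mathcal{L}\mathcal{L}^{-1}\mathcal{H}v,w\right>=\left<\mathcal{H}v,w\right>=B(v,w).
\]
Hence $\mathcal{H}_0$ is the self-adjoint operator on the Hilbert space $(X,(\cdot,\cdot)_X)$ that represents $B$, while $\mathcal{H}$ is the self-adjoint operator on $L^2$ associated with the same form. Note that $\mathcal{H}_0$ is a compact perturbation of the identity, since $v\mapsto \mathcal{L}^{-1}(f'(\vp)v)$ is compact on $X$ because $f'(\vp)$ decays.

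The first step treats the kernels. We have $\mathcal{H}v=0$ if and only if $\mathcal{L}^{-1}\mathcal{H}v=0$, because $\mathcal{L}$ is invertible. Also, any $L^2$ eigenfunction of $\mathcal{H}$ with eigenvalue $0$ lies in $X$, by elliptic regularity: $\mathcal{L}v=f'(\vp)v\in L^2$ gives $v\in H^{2s}\subset X$. So $\ker\mathcal{H}=\ker\mathcal{H}_0$, and (H2) transfers immediately.

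The second step, the heart of the argument, compares negative eigenvalue counts via a min-max characterization. For a self-adjoint operator whose form is $B$, the number of negative eigenvalues counted with multiplicity equals
\[
n_-(B):=\max\{\dim V : V\subset X,\ B(v,v)<0 \text{ for all } v\in V\setminus\{0\}\}.
\]
This quantity depends only on the form $B$ and its domain $X$, and not on which inner product ($L^2$ or $(\cdot,\cdot)_X$) is used to define the operator. For $\mathcal{H}_0$ on $X$ this is standard: the spectrum below $1$ consists of eigenvalues, by compactness, and $X$ decomposes into the negative spectral subspace and a $B$-nonnegative complement. For $\mathcal{H}$ on $L^2$ with form domain $X$, I would use the facts quoted in the proof of Theorem \ref{T:abh}. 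The spectrum below $c>0$ consists of isolated eigenvalues, so the spectrum below $0$ is finitely many eigenvalues of finite multiplicity. The negative spectral subspace $E_-$ consists of eigenfunctions lying in $X$, and $B\ge0$ on $E_-^{\perp_{L^2}}\cap X$. A standard argument then gives $n_-(B)=\dim E_-$: if $V$ is a negative subspace with $\dim V>\dim E_-$, then $V$ meets $E_-^{\perp}\cap X$ nontrivially, which is a contradiction. Applying the same characterization to $\mathcal{H}_0$ shows that both operators have exactly $n_-(B)$ negative eigenvalues counted with multiplicity. Since (H1) for $\mathcal{H}_0$ says $n_-(B)=1$, $\mathcal{H}$ has exactly one negative eigenvalue, and it is simple.

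The main obstacle is justifying rigorously, for $\mathcal{H}$ on $L^2$, that the form domain is exactly $X$ and that the negative spectrum is purely discrete with eigenfunctions in $X$. I would handle this by invoking the spectral description already cited from \cite{B:albert_bona_henry} in Theorem \ref{T:abh}, combined with the regularity bootstrap $v\in L^2$, $\mathcal{L}v=(f'(\vp)+\lambda)v$, which gives $v\in H^{2s}$. The remaining steps are routine linear algebra with the form $B$.
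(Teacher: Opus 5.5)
Your proposal is correct and is essentially the paper's argument. The kernel step is identical. The paper also excludes a second negative eigenvalue of $\mathcal{H}$ by your dimension count: it uses $(\mathcal{H}_0v,v)_X=\left<\mathcal{H}v,v\right>$ to show that a two-dimensional $B$-negative span of eigenfunctions of $\mathcal{H}$ must meet the $B$-nonnegative complement of the negative eigenvector of $\mathcal{H}_0$.

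The only difference is how existence of a negative eigenvalue of $\mathcal{H}$ is obtained. You get it from the same inertia count, since the negative eigenvector of $\mathcal{H}_0$ is a $B$-negative vector in $X$. The paper instead uses the test function $\vp$ and (F2), so your version shows this step does not need (F2).
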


\begin{proof} Since $\mathcal{L}$ is invertible, $\ker\mathcal{H}=\ker\mathcal{H}_0$, so $\mathcal{H}$ satisfies (H2). We next show that $\mathcal{H}$ has at most one linearly independent eigenfunction with a negative eigenvalue. Suppose to the contrary that $\mathcal{H}$ has linearly independent eigenfunctions $\psi_1$ and $\psi_2$ in $X$ with eigenvalues $\lambda_1\leq\lambda_2<0$. We may assume $\psi_1$ and $\psi_2$ are orthogonal in $L^2$ and have unit norm in $L^2$. Let $V=\mathrm{span}\{\psi_1,\psi_2\}$ 
%and let $C=\max\{\|\psi_1\|_X,\|\psi_2\|_X\}$. 
Given $v\in V$ we can write $v=d_1\psi_1+d_2\psi_2$ to obtain 
%	\[
%	\|v\|_X\leq C(|d_1|+|d_2|)\leq\sqrt{2}C\|v\|_{L^2},
%	\]
%so
	\begin{align*}
	(\mathcal{H}_0v,v)_X&=\left<\mathcal{H}v,v\right>
	=(-\lambda_1d_1\psi_1-\lambda_2d_2\psi_2,d_1\psi_1+d_2\psi_2)_{L^2}\\
	&=-\lambda_1d_1^2-\lambda_2d_2^2\\
	&\leq -\lambda_1\|v\|^2_{L^2},
	\end{align*}
so $(\mathcal{H}_0v,v)_X<0$ for all nonzero $v$ in $V$.  Now denote by $\lambda<0$ the unique negative eigenvalue of $\mathcal{H}_0$, $\psi$ a corresponding eigenfunction, and $V^+:=\mathrm{span}\{\psi,\vp'\}^\perp$. Since $0$ and $\lambda$ are the only eigenvalues of $\mathcal{H}_0$ and the remainder of the spectrum of $\mathcal{H}_0$ is bounded below by some positive constant $c_0$, we have $(\mathcal{H}_0p,p)_X\geq c_0\|p\|_X^2$ for any $p\in V^+$. We may now write
	\begin{align*}
	\psi_1&=a_1\psi+b_1\vp'+p_1\\	
	\psi_2&=a_2\psi+b_2\vp'+p_2
	\end{align*}
where $p_1,p_2\in V^+$.  Next note that $a_1=0$ would imply
	\[
	(\mathcal{H}_0\psi_1,\psi_1)_X=(\mathcal{H}_0p_1,b_1\vp'+p_1)_X=(\mathcal{H}_0p_1,p_1)_X\geq0
	\]
which contradicts the fact that $(\mathcal{H}_0\psi_1,\psi_1)_X=\left<\mathcal{H}\psi_1,\psi_1\right>=-\lambda_1\|\psi_1\|_{L^2}^2<0$. Hence $a_1\neq0$ and likewise $a_2\neq0$. Since $\psi_1$ and $\psi_2$ are linearly independent, $v:=a_2\psi_1-a_1\psi_2\in V$ is therefore nonzero so $(\mathcal{H}_0v,v)_X<0$. On the other hand, $v=(a_2b_1-a_1b_2)\vp'+a_2p_1-a_1p_2$, and $p:=a_2p_1-a_1p_2\in V^+$, so 
	$
	(\mathcal{H}_0v,v)_X=(\mathcal{H}_0p,p)_X\geq0$. This contradiction proves $\mathcal{H}$ has at most one linearly independent eigenfunction with a negative eigenvalue. It must have at least one negative eigenvalue since its essential spectrum is bounded below by a positive constant and by (F2)
	\begin{equation}\label{E:Hphiphi}
	\left<\mathcal{H}\vp,\vp\right>_{L^2}=\int_{\mathbb R}\vp f(\vp)-\vp^2f'(\vp)\,dx\leq-(p-1)\int_{\mathbb R}\vp f(\vp)\,dx
	=-(p-1)\int_{\mathbb R}\vp\mathcal{L}\vp\,dx<0.
	\end{equation}
Thus $\mathcal{H}$ satisfies (H1). 
% By Theorem \ref{T:abh}, $\mathcal{H}$ satisfies (H3).
\end{proof}

The proof of Theorem \ref{T:positivity} relies on the following result concerning integral operators with nonnegative kernels.

\begin{prop}\label{P:spectral_prop} Let $\Omega$ be an open, connected domain in $\mathbb R^n$ and let $\mu$ be a positive measure on $\Omega$. Suppose $G\in L^2(\Omega\times \Omega,\mu\times\mu)$ is continuous and satisfies
\begin{enumerate}[(i)]
    \item $G(x,y)=G(y,x)$ for all $(x,y)\in \Omega\times \Omega$.
    \item $G(x,y)\geq0$ for all $(x,y)\in \Omega\times \Omega$.
    \item $G(x,x)>0$ for all $x\in \Omega$
\end{enumerate}
For $w\in L^2(\Omega)$ define
    \begin{equation}\label{E:T_definition}
    T(w)(x):=\int_\Omega G(x,y)w(y)\,d\mu(y).
    \end{equation}
Then $T$ is a compact, self-adjoint operator on $L^2(\Omega,\mu)$, $T$ has a simple eigenvalue $\lambda_0>0$ and all other eigenvalues of $T$ satisfy $|\lambda_j|<\lambda_0$. Every nonzero eigenfunction for $\lambda_0$ must be either positive for all $x \in \Omega$ or negative for all $x \in \Omega$.
\end{prop}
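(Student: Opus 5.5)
Since $G \in L^2(\Omega\times\Omega,\mu\times\mu)$, the operator $T$ is Hilbert--Schmidt, hence compact; it is self-adjoint by the symmetry (i) and the fact that $G$ is real. The plan is a Krein--Rutman/Jentzsch-type argument carried out through the variational characterization. First I will show $T$ is not the zero operator: by (iii) and continuity, $G>0$ on a neighborhood $U\times U$ of some diagonal point, so for $w=\chi_B$ with $B\subset U$ small and of positive measure we get $(Tw,w)>0$. (Here I implicitly assume $\mu$ charges open sets, or at least some such $B$. If $\mu$ can vanish on open sets, the positivity statement ``for all $x\in\Omega$'' has to be read through the continuous representative $\lambda_0^{-1}\int G(x,y)\psi(y)\,d\mu(y)$; I will use that representative throughout.) Hence
\[
\lambda_0:=\sup_{\|w\|=1}(Tw,w)>0,
\]
and by compactness and self-adjointness the supremum is attained and equals the largest eigenvalue.

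Second, since $G\ge0$ we have $|(Tw,w)|\le (T|w|,|w|)$. Thus if $\psi$ is any eigenfunction for $\lambda_0$, then $|\psi|$ is also a maximizer, so $T|\psi|=\lambda_0|\psi|$. The same inequality gives $|\lambda_j|\le\lambda_0$ for every eigenvalue $\lambda_j$.

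Third, I will prove strict positivity. Let $\phi\ge0$, $\phi\ne0$, satisfy $T\phi=\lambda_0\phi$, and identify $\phi$ with its continuous representative $\phi(x)=\lambda_0^{-1}\int G(x,y)\phi(y)\,d\mu(y)$; continuity follows from dominated convergence, using $G\in L^2$ locally, as in a Hilbert--Schmidt estimate. Let $Z=\{x:\phi(x)=0\}$, which is closed. I claim $Z$ is also open. If $\phi(x_0)>0$, then by continuity $\phi>0$ near $x_0$; by (iii), $G(x,y)>0$ for $x,y$ near $x_0$; so $\phi(x)\ge\lambda_0^{-1}\int_{\text{near }x_0}G\phi\,d\mu>0$. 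This shows the positivity set propagates but does not yet rule out the zero set.

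The real obstacle is to show directly that $Z$ is open, and for this I plan to use symmetry. If $x_1\in Z$, then $\int G(x_1,y)\phi(y)\,d\mu=0$, so $G(x_1,\cdot)\phi=0$ $\mu$-a.e. Since $G(x_1,y)>0$ for $y$ near $x_1$, we get $\phi=0$ a.e. near $x_1$. Then for $x$ near $x_1$,
\[
\phi(x)=\lambda_0^{-1}\int G(x,y)\phi(y)\,d\mu(y).
\]
To make this vanish I want to exploit the symmetric structure, or equivalently study the quadratic form restricted to the sets $\{\phi>0\}$ and $\{\phi=0\}$. The rigorous route I will try is the following. Write $P=\{\phi>0\}$, which is open. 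Because $T\phi=\lambda_0\phi$, we have $\int_P G(x,y)\phi(y)\,d\mu(y)=0$ for every $x\in Z$, hence $G=0$ on $Z\times P$ $\mu\times\mu$-a.e., and by continuity on $Z\times \operatorname{supp}(\mu|_P)$. Then $\overline{P}$ and $Z$ meet: $\Omega$ is connected and both sets are nonempty, so some boundary point $z\in Z\cap\overline P$ exists. At $z$, continuity and (iii) give $G(z,y)>0$ for $y$ near $z$, and such $y$ include points of $P$ charged by $\mu$. This contradicts $G=0$ on $Z\times P$. Hence $Z=\emptyset$ and $\phi>0$ everywhere.

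Finally, simplicity and the strict inequality. If $\psi$ is a real eigenfunction for $\lambda_0$, then $|\psi|$ is one too, hence positive everywhere, so $\psi$ never vanishes; by continuity and connectedness $\psi$ has constant sign. Two linearly independent eigenfunctions would have a nonzero linear combination vanishing at a chosen point, which is impossible; hence $\lambda_0$ is simple. If $T\psi=-\lambda_0\psi$, then equality holds in $|(T\psi,\psi)|\le(T|\psi|,|\psi|)$, so $|\psi|$ is a positive eigenfunction and
\[
\int\!\!\int G\,\bigl(|\psi(x)||\psi(y)|+\psi(x)\psi(y)\bigr)\,d\mu\,d\mu=0.
\]
Thus $\psi(x)\psi(y)\le 0$ wherever $G>0$, in particular near the diagonal. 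Since $\psi=\pm|\psi|\ne0$ pointwise, this says $\psi(x)^2<0$ near diagonal points, a contradiction. Hence $|\lambda_j|<\lambda_0$ for all other eigenvalues $\lambda_j$.
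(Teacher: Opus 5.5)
Your proposal is correct and is essentially the paper's argument: Hilbert--Schmidt compactness, the variational characterization with $|\psi|$ as a maximizer, strict positivity from $G(x,x)>0$ plus connectedness, and then simplicity and the exclusion of $-\lambda_0$ by sign considerations. The differences are cosmetic: you use a vanishing linear combination and the near-diagonal sign where the paper uses orthogonality to $\psi_0$, and your boundary-point detour is just the paper's local argument at a zero, applied at one particular zero. In fact, once you had $\phi=0$ $\mu$-a.e.\ near $x_1$, continuity and the assumption that $\mu$ charges open sets (which you flag and the paper leaves implicit) already give $Z$ open, with no symmetry needed.
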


\begin{proof} Since $G\in L^2(\Omega\times\Omega)$, $T$ is a Hilbert-Schmidt operator on $Y:=L^2(\Omega,\mu)$ and is therefore compact. The symmetry of $G$ implies $T$ is self-adjoint. Therefore there exists an orthonormal basis of $L^2(\Omega,\mu)$ consisting of eigenfunctions $\{\psi_i\}_{i=0}^\infty$ of $T$, whose corresponding eigenvalues are real, have finite multiplicity, and can accumulate only at $0$. 

Since $G$ is $L^2$ and continuous, it follows that $T(w)$ is continuous for any $w\in Y$, and thus eigenfunctions with nonzero eigenvalues are continuous.  

From the spectral theorem for compact self-adjoint operators on a Hilbert space, it follows that
\begin{equation}
\lambda_0 = \sup_{\psi \in Y,\ \|\psi\|_Y = 1} ( T\psi, \psi)_Y.
\label{maxeig}
\end{equation}
and that every function $\psi \in Y$ which satisfies $\|\psi\|_Y = 1$ and $(T\psi,\psi)_Y=\lambda_0$ must be an 
eigenfunction of $T$ for the eigenvalue $\lambda_0$.  But if $\psi \in Y$ and $\|\psi\|_Y = 1$, then
 $w = |\psi|$ satisfies $\|w\|_Y=1$, and since $G(x,y)\ge 0$ for all $x,y \in\Omega$, we have that 
 $(Tw,w) \ge (T\psi,\psi)$.  Hence if $\psi$ is an eigenfunction of $T$ for $\lambda_0$, then so is $w = |\psi|$. 

We claim that  if $\psi_0$ is an nonzero eigenfunction for $\lambda_0$, then $|\psi_0(x)| > 0$ for all $x \in \Omega$.  To prove this, let $\Omega_0$ be the set of all $x \in \Omega$ such that $\psi_0(x)=0$ and suppose for contradiction that $\Omega_0$ is nonempty. Then for any $x_*\in\Omega_0$ we have
    \[
    0=\lambda_0|\psi_0(x_*)|=T(|\psi_0|)(x_*)
    =\int_\Omega G(x_*,y)|\psi_0(y)|\,d\mu(y).
    \]
By the continuity of $G$ and the fact that $G(x_*,x_*)>0$, there exists $\delta>0$ such that $B_\delta(x_*)\subseteq\Omega$ and $G(x_*,y)>0$ for all $y\in B_\delta(x_*)$. Since $G(x_*,y)|\psi_0(y)|$ is nonnegative for all $y\in\Omega$ it then follows that $\psi_0(y)=0$ for all $y\in B_\delta(x_*)$. This proves $\Omega_0$ is open.  But $\Omega_0$ is clearly also closed, so since $\Omega$ is connected, we have $\Omega_0=\Omega$, implying  that $\psi_0$ is the zero function in contradiction to our assumption.  Therefore $\Omega_0$ must be empty, which establishes the claim.

From the claim it follows that if $\psi_0$ is an nonzero eigenfunction for $\lambda_0$, then $\psi_0(x) \ne 0$ for all $x \in \Omega$, and from the continuity of $\psi_0$ it follows that either $\psi_0(x)>0$ for all $x \in \Omega$ or $\psi_0(x)<0$ for all $x \in \Omega$.
Since no two functions with this property can be orthogonal in $Y$, it follows that $\lambda_0$ must be a simple eigenvalue of $T$ in $Y$.

We now claim that $T$ cannot have any negative eigenvalues $\sigma$ such that $|\sigma| \ge \lambda_0$.  Suppose to the contrary that $\sigma$ is such an eigenvalue, with corresponding eigenfunction $\psi_\sigma$ satisfying $\|\psi_\sigma\|_Y = 1$.
Then
 $$
\begin{aligned}
(T|\psi_\sigma|,|\psi_\sigma|)_Y &=
\int_{\Omega}\int_{\Omega} G(x,y)|\psi_\sigma(x)| |\psi_\sigma(y)|\ d\mu(x)\ d\mu(y) \\
&\ge \left|\int_{\Omega}\int_{\Omega} G(x,y)\psi_\sigma(x) \psi_\sigma(y)\ d\mu(x)\ d\mu(y) \right| \\
&=|(T\psi_\sigma,\psi_\sigma)_Y| = |\sigma| \ge \lambda_0, 
\end{aligned}
$$
from which it follows that $|\psi_\sigma|$ is an eigenfunction for $\lambda_0$.  Therefore, as shown above, we must have $|\psi_\sigma(x)|>0$ for all $x \in \Omega$.  Hence $\psi_\sigma(x) \ne 0$ for all $x \in \Omega$, and from the continuity of $\psi_\sigma$ it follows that $\psi_\sigma$ is either everywhere positive or everywhere negative on $\Omega$.  But this is impossible, since $\psi_\sigma$ must be orthogonal to $\psi_0$.  This contradiction proves the claim.
\end{proof}

\noindent{\bf Proof of Theorem \ref{T:positivity}.}
By Theorem \ref{T:H0H} it suffices to show $\mathcal{H}_0$ satisfies (H1) and (H2). The proof follows that of Theorem 4 in \cite{B:albert_bona}. 
Define
	\[
	T_1 v:=\mathcal{L}^{-1}(f'(\vp)v)
	\]
for $v\in X$. Since $T_1 v=\lambda v$ if and only if $\mathcal{H}_0v=(1-\lambda)v$, it suffices to prove that $1$ is a simple eigenvalue of $T_1$ and that $T_1$ has a unique simple eigenvalue $\lambda_0>1$. Note that $T_1 v=K*(f'(\vp)v)$, so $T_1$ is an operator of the form \eqref{E:T_definition} with $\Omega=\mathbb R$, $d\mu_1=f'(\vp)\,dx$ and $G_1(x,y)=K(y-x)$. By the growth assumption \eqref{E:L} on $\alpha$, we have $1/\alpha\in L^2(\mathbb R)$, and therefore $K\in L^2(\mathbb R)$ and thus $K^2\in L^1(\mathbb R)$. Since $\vp\in H^s(\mathbb R)$ with $s>\frac12$, we have $\vp\in L^\infty(\mathbb R)$ and by (F3) it follows that $f'(\vp)\in L^2(\mathbb R)$. By Young's inequality we then have $K^2*f'(\vp)\in L^2(\mathbb R)$, and therefore
    \begin{equation}\label{E:K_bound}
    \iint_{\mathbb R^2}K(x-y)^2f'(\vp(x))f'(\vp(y))\,dy\,dx=\left<K^2*f'(\vp),f'(\vp)\right>_{L^2(\mathbb R)}<\infty
    \end{equation}
which proves $G_1\in L^2(\mathbb R^2,\mu_1\times\mu_1)$. Since $K$ is continuous, positive and even, it follows that $G_1$ is continuous and satisfies conditions (i), (ii) and (iii) in Proposition \ref{P:spectral_prop}. 

Next define an operator $T_2$ on $Y_2:=L^2(\Delta,\mu_2)$ by
    \[
	T_2 w(\bar{x}):=\iint_\Delta G_2(\bar{x},\bar{y}) w(\bar{y})\ d\mu_2(\bar{y}),
	\]
where $\mu_2=\mu_1\times\mu_1$ and 
    \[
G_2(\bar{x},\bar{y}):=G_1(x_1,y_1)G_1(x_2,y_2)-G_1(x_1,y_2)G_1(x_2,y_1)
    \]
for $\bar x = (x_1,x_2) \in \Delta$ and $\bar y = (y_1,y_2) \in \Delta$.  It follows from \eqref{E:K_bound} that $G_2\in L^2(\Delta\times\Delta,\mu_2\times\mu_2)$. The assumption that $K$ is strongly $PF(2)$ implies $G_1$ is strongly $TP(2)$, so $G_2$ satisfies (i), (ii) and (iii) in Proposition \ref{P:spectral_prop}.

Therefore, by Proposition \ref{P:spectral_prop}, there exist orthonormal bases of $Y_1$ and $Y_2$ consisting of eigenfunctions of $T_1$ and $T_2$, respectively. Denote by $\{\psi_j,\lambda_j\}_{j=0}^\infty$ the eigenfunctions and corresponding eigenvalues of $T_1$ and by $\{\zeta_j,\kappa_j\}_{j=0}^\infty$ those of $T_2$, where $\lambda_0$ and $\kappa_0$ are simple eigenvalues such that $\lambda_0>|\lambda_j|$ and $\kappa_0 > |\kappa_j|$ for $j\geq 1$. The operators $T_1$ and $T_2$ are related by the identity
    \begin{equation}
	T_2(v\wedge w)=T_1(v)\wedge T_1(w)
    \label{E:T2_of_wedge}
\end{equation}
for any $v,w\in Y_1$, where
	\[
	(v\wedge w)(x_1,x_2):=v(x_1)w(x_2)-v(x_2)w(x_1).
	\]
It then follows that if $\psi_i\wedge\psi_j$ is nonzero, it is an eigenfunction of $T_2$ with eigenvalue $\lambda_i\lambda_j$. Exactly as in the proof of Lemma 9 of \cite{B:albert_bona}, we see that $\lambda_0 \lambda_1 = \kappa_0$. 

We claim now that if $\psi_1$ is any nonzero eigenfunction of $T_1$ for the eigenvalue $\lambda_1$, then $\psi_1$ must be odd, and must vanish only at $x=0$.  This may be proved using the same arguments as in the proof of Theorem 4 in \cite{B:albert_bona}, but for the reader's convenience we give the details here. First, note that since $\alpha$ and $\vp$ are even, the operator $T_1$ preserves parity, and hence the even part $\psi_1^e$ of $\psi_1$, defined by $\psi_1^{e}(x):=\frac12(\psi_1(x)+\psi_1(-x))$, satisfies $T_1(\psi_1^e)= \lambda_1 \psi_1^e$.  Therefore \eqref{E:T2_of_wedge} implies that $T_2(\psi_1^e \wedge \psi_0)=\kappa_0 (\psi_1^e \wedge \psi_0)$, and from Proposition \ref{P:spectral_prop} it then follows that $\psi_1^e \wedge \psi_0$ is either identically zero on $\Delta$ or does not vanish at all on $\Delta$.  Now since $\psi_1^e$ and $\psi_0$ belong to eigenspaces of $T_1$ for distinct eigenvalues, they must be orthogonal in $L^2(\mathbb{R},\mu_1)$, and since $\psi_1^e$ is even and $\psi_0$ is of one sign on $\mathbb{R}$, it follows that $\psi_1^e$ must have at least two distinct zeros $x_1$ and $x_2$ in $\mathbb{R}$, with say $x_1 <  x_2$.  But then $\psi_1^e \wedge \psi_0$ vanishes at $(x_1, x_2) \in \Delta$, and hence must vanish everywhere in $\Delta$. Now for all $x \in \mathbb{R}$ we have either $x <  x_2$, in which case $(\psi_1^e \wedge \psi_0)(x, x_2) = 0$ implies $\psi_1^e(x)=0$, or $x > x_1$, in which case $(\psi_1^e \wedge \psi_0)( x_1, x) = 0$ again implies $\psi_1^e(x)=0$.  This shows that $\psi_1^e$ is identically zero on $\mathbb R$, and hence $\psi_1$ is odd.  

If $\psi_1(x)$ were to have two distinct zeros in $\mathbb{R}$, then since $T_2(\psi_1 \wedge \psi_0)=\kappa_0(\psi_1 \wedge \psi_0)$, the same argument as used above for $\psi_1^e \wedge \psi_0$ would show that  $\psi_1 \wedge \psi_0$ is identically zero on $\Delta$. But then for all $x>0$ one would have $(\psi_1 \wedge \psi_0)(0,x)=0$, and so $\psi_1(x)=0$; while for all $x<0$ one would have $(\psi_1\wedge \psi_0)(0,x)=0$, and so again $\psi_1(x)=0$.  Thus it would follow that $\psi_1$ is identically zero on $\mathbb{R}$, a contradiction.  This proves $\psi_1$ can only vanish at $x=0$.

Since two odd functions which each vanish only at zero cannot be orthogonal in $L^2(\mathbb{R},\mu_1)$, it follows from the claim we have just proved that $\lambda_1$ must be a simple eigenvalue of $T_1$. 

Finally, note that $\vp'$ is an eigenfunction of $T_1$ with eigenvalue 1. Since $\vp$ is positive and even, then $\vp'$ is odd and vanishes only at $x=0$. Thus $(\psi_1,\vp')_{Y_1}\neq0$, so $\psi_1$ and $\vp'$ cannot be eigenfunctions with distinct eigenvalues. This proves that $\lambda_1=1$. Since we already know that $\lambda_0$ is simple and is the only eigenvalue of $T_1$ greater than $\lambda_1$, the proof of the theorem is complete.
\hfill$\blacksquare$

%\begin{lm}
%The eigenvalues $\lambda_0$ and $\lambda_1$ of $T$ are positive and 
%simple, and the eigenvalue $\mu_0$ of $T_2$ is positive and simple.  
%Moreover, we have $\mu_0 = \lambda_0 \lambda_1$.  If $\psi$ is an 
%eigenfunction of $T$, then, up to multiplication by a real constant, we 
%have $\psi(x)>0$ for all $x \in \mathbb R$.  Every eigenfunction of $T$ 
%for the eigenvalue $\lambda_0$ must be everywhere positive and everywhere 
%negative on $\mathbb R$, as must every eigenfunction of $T_2$ for the 
%eigenvalue $\mu_0$. (Also, I think every eigenfunction of $T$ for the 
%eigenvalue $\lambda_1$ must be odd and can vanish only at $x=0$.)
%\label{eigenvaluelemma}
%\end{lm}

\vskip 10pt

 \section{A class of examples}\label{sec:class_of_examples}
We now turn our attention to the question of which symbols $\alpha(\xi)$ have the property that $\mathcal{F}^{-1}(1/\alpha(\xi))$ is even and strongly $PF(2)$. We first recall a sufficient condition for a function to be strictly (and therefore strongly) $PF(2)$.

\begin{lm}\label{L:PF2log} (\cite{B:albert_bona}) Suppose $K$ is positive and twice differentiable on $\mathbb R$, and $\frac{d^2}{dx^2}\left(\log(K(x))\right)<0$ for $x\neq0$. Then $K$ is strictly $PF(2)$.
\end{lm}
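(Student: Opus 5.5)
Positivity of $K$ is assumed, so $K(x-y)>0$ for all $(x,y)$. What remains is to show $G_2(\bar x,\bar y)>0$ for all $x_1<x_2$ and $y_1<y_2$. This is the inequality
\[
K(x_1-y_1)K(x_2-y_2)>K(x_1-y_2)K(x_2-y_1).
\]
Set $h=\log K$, which is well defined and twice differentiable because $K>0$. The inequality then becomes
\[
h(x_1-y_1)+h(x_2-y_2)>h(x_1-y_2)+h(x_2-y_1).
\]
Write $a=x_2-y_2$, $b=x_1-y_1$, $c=x_1-y_2$ and $d=x_2-y_1$. Then $a+b=c+d$, $c<\min(a,b)$ and $d>\max(a,b)$, since $c=a-(x_2-x_1)=b-(y_2-y_1)$ and similarly for $d$. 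So $c<a,b<d$, and the pair $\{a,b\}$ lies strictly inside $[c,d]$ with the same midpoint sum. Strict concavity of $h$ on $\mathbb R$ then gives $h(a)+h(b)>h(c)+h(d)$, which is the standard majorization (Karamata) inequality.

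The main point is to get strict concavity of $h$ even though $h''<0$ is only assumed off $x=0$. Since $h$ is $C^1$, $h'$ is continuous. On $(-\infty,0]$ it is strictly decreasing: by the mean value theorem on intervals not containing $0$ in their interior, plus continuity at $0$. The same holds on $[0,\infty)$, so $h'$ is strictly decreasing on all of $\mathbb R$. Hence $h$ is strictly concave.

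For the two-point inequality directly, let $\delta=a-c=d-b>0$ (assuming, say, $a\le b$; the other case is symmetric). Then
\[
h(a)-h(c)=\int_c^{a}h'(t)\,dt=\int_0^\delta h'(c+t)\,dt>\int_0^\delta h'(b+t)\,dt=h(d)-h(b),
\]
because $c<b$ and $h'$ is strictly decreasing. This gives the strict inequality. Multiplying out by $\exp$ gives $G_2>0$, so $K$ is strictly $PF(2)$.
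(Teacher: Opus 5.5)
Your proof is correct. The paper gives no argument of its own for this lemma; it only cites Schoenberg and Albert--Bona. What you wrote is the standard proof of that classical fact: reduce $G_2>0$ to the four-point inequality $h(a)+h(b)>h(c)+h(d)$ for $h=\log K$, and get that inequality from strict monotonicity of $h'$.

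The one delicate point is that $h''<0$ is only assumed off $x=0$, and you handle it correctly. The mean value theorem on an interval $[s,t]$ lying in $(-\infty,0]$ or in $[0,\infty)$ only needs $h''<0$ on the open interval $(s,t)$. The two half-lines are then joined through the value $h'(0)$.

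A minor simplification: the case split ``assuming $a\le b$'' is unnecessary. In general you have $\delta=a-c=d-b=x_2-x_1>0$ and $c=b-(y_2-y_1)<b$, so your integral comparison works as written in every case.
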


For a proof of this classic result, which dates to \cite{B:schoenberg}, see for example \cite{B:albert_bona}. We note that strict log-concavity is not a necessary condition for a function to be strongly $PF(2)$. For example, it is straightforward to verify that for any $a>0$, $K(x):=e^{-a|x|}$ is strongly $PF(2)$, but neither strictly log-concave nor strictly $PF(2)$. 
\vskip 10pt

As noted in \cite{B:karlin}, a general property of Polya frequency functions is that their convolutions are again Polya frequency functions.  In particular, we have: 

\begin{lm}\label{P:convolution_PF2}  Suppose $K$ and $H$ are integrable functions on $\mathbb R$, and are both $PF(2)$.  Then $K \ast H$ is $PF(2)$. If both $K$ and $H$ are strictly $PF(2)$, then $K \ast H$ is strictly $PF(2)$.  Finally, if $K$ and $H$ are continuous on $\mathbb R$ and are strongly $PF(2)$, then $K \ast H$ is strongly $PF(2)$.  
\end{lm}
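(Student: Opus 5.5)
The approach is the Cauchy--Binet (basic composition) formula for $2\times 2$ minors of a convolution. Write $M = K\ast H$, so $M(x-y) = \int_{\mathbb R} K(x-z)H(z-y)\,dz$. For $x_1<x_2$ and $y_1<y_2$, expanding the products and symmetrizing the resulting double integral over $(z_1,z_2)$ gives
\[
M(x_1-y_1)M(x_2-y_2)-M(x_1-y_2)M(x_2-y_1)=\iint_{z_1<z_2} K_2(\bar x,\bar z)\,H_2(\bar z,\bar y)\,dz_1\,dz_2 ,
\]
where $K_2$ and $H_2$ denote the functions \eqref{subscript2function} built from $G(x,y)=K(x-y)$ and $G(x,y)=H(x-y)$. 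Since $K,H\in L^1$ and both are nonnegative, Fubini--Tonelli justifies the manipulations as long as the integrals are finite. For almost every pair, $M$ is finite, and nonnegativity of $M$ is immediate.

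\textbf{$PF(2)$ case.} The integrand on the right is a product of two nonnegative quantities, so the determinant is $\ge 0$.

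\textbf{Strictly $PF(2)$ case.} Here $K>0$ and $H>0$, so $M>0$ everywhere. Also $K_2>0$ and $H_2>0$ on $\Delta\times\Delta$, so the integrand is strictly positive on the set $\{z_1<z_2\}$, which has positive measure. Hence the minor is $>0$.

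\textbf{Strongly $PF(2)$ case.} This is the step needing care. First, $M(0)=\int K(-z)H(z)\,dz$, and since $K(0)>0$ and $H(0)>0$ with both continuous, the integrand is positive near $z=0$, so $M(0)>0$. For the diagonal minor, take $\bar y=\bar x=(x_1,x_2)\in\Delta$. The integrand $K_2(\bar x,\bar z)H_2(\bar z,\bar x)$ is continuous in $\bar z$: continuity of $K$ and $H$ makes it continuous wherever it is defined, and there is no problem since $K_2$ and $H_2$ are built from point values. At $\bar z=\bar x\in\Delta$ it equals $K_2(\bar x,\bar x)H_2(\bar x,\bar x)>0$ by strong positivity. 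So the integrand is positive on a neighborhood of $\bar x$ inside $\Delta$, and, being nonnegative elsewhere, it gives a strictly positive integral. One should also confirm that continuity of $K$ and $H$ with integrability makes $M$ well defined everywhere. If that is problematic, it suffices to note that the minor is $+\infty$ or a positive number; in the intended applications the kernels are bounded, so $M$ is finite and continuous.

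The main obstacle is rigorously deriving the composition identity. Expand $M(x_1-y_1)M(x_2-y_2)$ as a double integral over $(z_1,z_2)\in\mathbb R^2$, and do the same for the other product. Subtract, then split $\mathbb R^2$ into $z_1<z_2$ and $z_1>z_2$ (the diagonal has measure zero). Swap variables on the second piece and collect terms, which yields $K_2H_2$. Absolute integrability of each product term follows from $K,H\in L^1$ together with finiteness of $M$ at the points used, which justifies splitting the difference of integrals.
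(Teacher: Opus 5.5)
Your proposal is correct and follows essentially the same route as the paper. Both use the basic composition (Cauchy--Binet) formula $M_2(\bar x,\bar y)=\iint_\Delta G_2(\bar x,\bar z)L_2(\bar z,\bar y)\,d\bar z$, obtained by splitting $\mathbb R^2$ into $\Delta$ and its complement, followed by the same sign arguments, with continuity giving positive integrands near $z=x$ and $\bar z=\bar x$ in the strongly $PF(2)$ case.

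Your hedge about $M$ possibly being infinite is unnecessary, and the alternative ``$+\infty$ or a positive number'' is not meaningful, since an $\infty-\infty$ form could arise. An integrable $PF(2)$ function is log-concave on an interval and hence bounded, so $K\ast H$ is finite everywhere and your Tonelli justification applies at every point; the paper also leaves this detail implicit.
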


\begin{proof}
Suppose $G$ and $L$ are defined on $\mathbb R^2$, and suppose the map $z \mapsto G(x,z)L(z,y)$ is integrable on $\mathbb R$ for all $(x,y) \in \mathbb R^2$.  If we define $M$ on $\mathbb R^2$ by 
\begin{equation}
M(x,y)= \int_{\mathbb R} G(x,z)L(z,y)\ dz,
\label{defM}
\end{equation}
then we have the formula
\begin{equation}
M_2(\bar x,\bar y) = \iint_\Delta G_2(\bar x, \bar z) L_2(\bar z, \bar y)\ d\bar z,
\label{bcf}
\end{equation}
which is valid for all $\bar x \in \mathbb R^2$ and $\bar y \in \mathbb R^2$.  This is a special case of a general formula which appears as formula (2.5) in chapter 1 of Karlin [1968]; the proof is elementary (expand the function on the left-hand side of \eqref{bcf} as a sum of integrals over $\Delta$ and $\mathbb R^2 \backslash \Delta$, which can be consolidated into the single integral over $\Delta$ appearing on the right-hand side).  
%Karlin in turn references problem 68 in part II of volume I of   [P\'olya and Szeg\"o, {\it Aufgaben und Lehrs\"atze aus der Analysis}, 1925].  

Now suppose $K$ and $H$ are integrable on $\mathbb R$, so that $K \ast H$ is also integrable on $\mathbb R$.  Setting $G(x,y)=K(x-y)$ and $L(x,y)=H(x-y)$, and defining $M$ by \eqref{defM}, we see via the change of variables $\tilde z = z - y$ that $M(x,y) = (K\ast H)(x-y)$.  If $K$ and $H$ are $PF(2)$, then $G$ and $L$ are $TP(2)$.  From \eqref{defM} we have $M(x,y) \ge 0$ for all $(x,y) \in \mathbb{R}^2$, and from \eqref{bcf} we have that $M_2(\bar x,\bar y) \ge 0$ for all $\bar x, \bar y \in \Delta$.  Hence $M$ is $TP(2)$, and so $K \ast H$ is $PF(2)$. The same considerations show that if $K$ and $H$ are strictly $PF(2)$, then $K \ast H$ is strictly $PF(2)$.

To prove the final assertion of the theorem, observe that for all $x \in \mathbb R$ and  all $\bar x \in \Delta$, \eqref{defM} and \eqref{bcf} imply that  
\begin{equation}
\begin{aligned}
M(x,x) &=\int_{\mathbb R} G(x,z)L(z,x)\ dz\\
M_2(\bar x,\bar x) &= \iint_\Delta G_2(\bar x, \bar z) L_2(\bar z, \bar x)\ d\bar z.
\label{bcfdiag}
\end{aligned}
\end{equation}
If $K$ and $H$ are strongly $PF(2)$, then the integrand in the first equation in \eqref{bcfdiag} is strictly positive when $z = x$, and the integrand in the second equation is strictly positive when $\bar z = \bar x$.  By continuity of $K$ and $H$, both integrands must be positive on a set of positive measure.    Since the integrands are everywhere non-negative, it follows that $M(x,x)$ and $M_2(\bar x,\bar x)$ are strictly positive.  Hence $K \ast H$ is strongly $PF(2)$.
\end{proof}

\begin{lm}\label{L:convolution_exponential_PF2} For $m\geq1$, let $K(x):=e^{-a_1|x|}*\cdots*e^{-a_m|x|}$, where $a_i>0$ for $1\leq i\leq m$. Then $K$ is even and strongly $PF(2)$.
\end{lm}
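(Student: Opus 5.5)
The plan is to argue by induction on $m$, using Lemma \ref{P:convolution_PF2} for the inductive step. Everything then reduces to checking that a single exponential $E_a(x):=e^{-a|x|}$, $a>0$, is even, continuous, integrable and strongly $PF(2)$. Evenness, continuity and integrability are immediate. Evenness of $K$ is also easy directly: the Fourier transform of each factor is the even function $2a_i/(a_i^2+\xi^2)$, so $\mathcal F K$ is a product of even functions, and $K$ is even.

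For the base case $m=1$, write $G(x,y)=e^{-a|x-y|}$. First, $G>0$ everywhere and $G(x,x)=1>0$. For $x_1<x_2$ and $y_1<y_2$, the quantity $G_2(\bar x,\bar y)\ge0$ is equivalent to
\[
|x_1-y_1|+|x_2-y_2|\le |x_1-y_2|+|x_2-y_1|,
\]
because the exponential is monotone. I would verify this inequality by noting that $\phi(t)=|t|$ is convex, so the function $(x,y)\mapsto -|x-y|$ is supermodular: for $x_1<x_2$, the map $y\mapsto |x_2-y|-|x_1-y|$ is nonincreasing in $y$. To see this, examine the three regions $y\le x_1$, $x_1\le y\le x_2$, and $y\ge x_2$, where it equals $x_2-x_1$, then $x_1+x_2-2y$, then $x_1-x_2$ respectively. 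Comparing its values at $y_1<y_2$ gives exactly the inequality above. On the diagonal, $G_2(\bar x,\bar x)=1-e^{-2a(x_2-x_1)}>0$ for $x_1<x_2$, so $E_a$ is strongly $PF(2)$. Equality in the inequality occurs, for instance, when $y_1,y_2\le x_1$, so $E_a$ is not strictly $PF(2)$; this matches the remark preceding Lemma \ref{P:convolution_PF2}.

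For the inductive step, suppose $K_{m-1}=E_{a_1}*\cdots*E_{a_{m-1}}$ is integrable, continuous and strongly $PF(2)$. Then $K_m=K_{m-1}*E_{a_m}$. Both factors are integrable, continuous and strongly $PF(2)$, so the last part of Lemma \ref{P:convolution_PF2} gives that $K_m$ is strongly $PF(2)$. The convolution of two $L^1$ functions is $L^1$, and convolving $L^1$ with the bounded, uniformly continuous function $E_{a_m}$ gives a continuous function, so the hypotheses carry forward.

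The only step requiring any care is the base-case inequality. The supermodularity argument handles all orderings of $y_1,y_2$ relative to $x_1,x_2$ at once, so I expect no real obstacle there; the rest is bookkeeping of hypotheses for Lemma \ref{P:convolution_PF2}.
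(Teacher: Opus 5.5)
Your proposal is correct and follows essentially the same route as the paper: the paper also reduces the lemma to the fact (noted after Lemma \ref{L:PF2log}) that $e^{-a|x|}$ is strongly $PF(2)$, and then applies the last part of Lemma \ref{P:convolution_PF2} to the $m$-fold convolution. Your base-case verification via $|x_1-y_1|+|x_2-y_2|\le|x_1-y_2|+|x_2-y_1|$, and your tracking of continuity and integrability through the induction, supply details the paper calls straightforward and leaves implicit.
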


\begin{proof} As noted above, $e^{-a|x|}$ is strongly $PF(2)$ for any $a>0$, so $K$ is a convolution of even, strongly $PF(2)$ functions. The result is therefore an immediate consequence of Lemma \ref{P:convolution_PF2}.
\end{proof}

\begin{rem} For $m\geq2$, the kernel $K$ in Lemma \ref{L:convolution_exponential_PF2} is in fact log-concave. This can be verified by a direct (but lengthy) calculation in the cases $m=2$ and $m=3$, and then follows for general $m$ by induction, using the fact that convolution preserves log-concavity.
\end{rem}

\begin{cor}\label{C:L_factors_H1H2} Let $\vp$ be a solution of $\mathcal{L}\vp=f(\vp)$ where $\mathcal{L}$ has symbol $\alpha(\xi)=\prod_{i=1}^m(\xi^2+a_i^2)$ where each $a_i$ is real and nonzero. If $\vp$ and $f'(\vp)>0$ are positive and even, then $\mathcal{H}$ satisfies (H1) and (H2).
\end{cor}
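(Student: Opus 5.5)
This follows by applying Theorem \ref{T:positivity}. The only work is to check its hypotheses for the symbol $\alpha(\xi)=\prod_{i=1}^m(\xi^2+a_i^2)$.

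\emph{Assumption \ref{A:alpha_assumption}.} Without loss of generality each $a_i>0$, since replacing $a_i$ by $|a_i|$ does not change $\alpha$. Then $\alpha$ is continuous, even and positive. Its growth is that of a polynomial of degree $2m$, so $\alpha(\xi)/(1+|\xi|^{2m})$ is continuous and positive on $\mathbb R$ and tends to $1$ as $|\xi|\to\infty$. It is therefore bounded above and below by positive constants, which gives \eqref{E:L} with $s=m\geq1>\frac12$. Assumption \ref{A:f_assumption} on $f$ is implicitly part of the setting, as in Theorem \ref{T:positivity}, and the positivity and evenness of $\vp$ and $f'(\vp)$ are given directly.

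\emph{The kernel.} With the paper's convention $\mathcal F g(\xi)=\int e^{ix\xi}g(x)\,dx$, a direct computation gives
\[
\mathcal F\bigl(e^{-a|x|}\bigr)(\xi)=\frac{2a}{\xi^2+a^2}.
\]
Since $\mathcal F(g*h)=\mathcal Fg\cdot\mathcal Fh$, it follows that
\[
\frac{1}{\alpha(\xi)}=\prod_{i=1}^m\frac{1}{\xi^2+a_i^2}=\Bigl(\prod_{i=1}^m\frac{1}{2a_i}\Bigr)\,\mathcal F\bigl(e^{-a_1|x|}*\cdots*e^{-a_m|x|}\bigr)(\xi).
\]
Hence $K=\mathcal F^{-1}(1/\alpha)$ is a positive constant multiple of the convolution in Lemma \ref{L:convolution_exponential_PF2}. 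All functions involved are in $L^1$, and $1/\alpha\in L^1$, so inverting the transform is legitimate. Lemma \ref{L:convolution_exponential_PF2} shows the convolution is even and strongly $PF(2)$.

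Multiplying a function by a positive constant $c$ scales $G(x,y)$ by $c$ and $G_2$ by $c^2$. This preserves all the sign conditions in Definition \ref{totalpositivity}, so $K$ is also even and strongly $PF(2)$.

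\emph{Conclusion.} All hypotheses of Theorem \ref{T:positivity} are met, so $\mathcal H$ satisfies (H1) and (H2). The only step needing care is the Fourier-transform identity and its normalization constant, and that constant is harmless for the reason just given.
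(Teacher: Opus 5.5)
Your proposal is correct and is essentially the paper's proof. The paper likewise writes $K=\mathcal{F}^{-1}(1/\alpha)=\frac{1}{2^m\prod_{i=1}^m a_i}\,e^{-a_1|x|}*\cdots*e^{-a_m|x|}$ and then invokes Lemma \ref{L:convolution_exponential_PF2} and Theorem \ref{T:positivity}. Your additional checks make explicit what the paper leaves implicit:
\begin{itemize}
\item Assumption \ref{A:alpha_assumption} holds with $s=m$.
\item Replacing $a_i$ by $|a_i|$ makes the exponentials integrable and the normalizing constant positive.
\item Strong $PF(2)$ is preserved under multiplication by a positive constant.
\end{itemize}
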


\begin{proof} Since
    \[
    \mathcal{F}^{-1}(1/\alpha(\xi))=\frac{1}{2^m\prod_{i=1}^ma_i}e^{-a_1|x|}*\cdots*e^{-a_m|x|}
    \]
the result follows from Lemma \ref{L:convolution_exponential_PF2} and Theorem \ref{T:positivity}.
\end{proof}

The following lemma provides an example of a family of elliptic operators $\mathcal{L}$ with symbols $\alpha$ that satisfy \eqref{E:L} but such that $\mathcal{F}^{-1}(1/\alpha(\xi))$ is {\it not} strongly $PF(2)$.

\begin{lm}\label{L:nonPF2kernel} Let $K(x)=\displaystyle\mathcal{F}^{-1}\left(\frac{1}{\alpha(\xi)}\right)$, where $\alpha(\xi)=((\xi-\sigma)^2+\tau^2)((\xi+\sigma)^2+\tau^2)$ for some nonzero real numbers $\sigma$ and $\tau$. Then 
 \begin{equation}
 \label{nonpf2}
   K(x)=\frac1{4\sigma(\sigma^2+\tau^2)}e^{-\tau|x|}\left(\sin(\sigma x)\,\mathrm{sign}(x)+\frac{\sigma}{\tau}\cos(\sigma x)\right).
 \end{equation}
\end{lm}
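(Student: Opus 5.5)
Since the paper's convention is $\mathcal F^{-1}g(x)=\frac1{2\pi}\int e^{-ix\xi}g(\xi)\,d\xi$, it is cleanest to work in the forward direction: compute $\mathcal F K$ for the claimed $K$ and check that it equals $1/\alpha$. Because $K$ is even, $\mathcal F K(\xi)=2\int_0^\infty \cos(x\xi)K(x)\,dx$. For $x>0$ we have $\mathrm{sign}(x)=1$, so I would write
\[
\sin(\sigma x)+\tfrac{\sigma}{\tau}\cos(\sigma x)=\mathrm{Im}\,e^{i\sigma x}+\tfrac{\sigma}{\tau}\mathrm{Re}\,e^{i\sigma x}.
\]
Then everything reduces to the elementary integrals $\int_0^\infty e^{-(\tau\mp i\sigma)x}\cos(x\xi)\,dx$, given by the formula $\int_0^\infty e^{-zx}\cos(\xi x)\,dx=z/(z^2+\xi^2)$ for $\mathrm{Re}\, z>0$. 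Here I would assume $\tau>0$; if $\tau<0$, replace $\tau$ by $|\tau|$ in the formula, since $\alpha$ depends only on $\tau^2$.

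Alternatively, and perhaps more transparently, I would use partial fractions on the symbol side. Write $z=\tau+i\sigma$ and note that
\[
(\xi-\sigma)^2+\tau^2=(\xi-\sigma-i\tau)(\xi-\sigma+i\tau),
\]
so the zeros of $\alpha$ are $\pm\sigma\pm i\tau$. A natural grouping is
\[
\alpha(\xi)=(\xi^2+z^2)(\xi^2+\bar z^2),
\]
which one verifies by expanding: $\xi^2+z^2=\xi^2+\tau^2-\sigma^2+2i\sigma\tau$, and $|\xi^2+\tau^2-\sigma^2|^2+4\sigma^2\tau^2$ equals the product of the two quadratics. Hence
\[
\frac1\alpha=\frac{1}{\bar z^2-z^2}\Big(\frac1{\xi^2+z^2}-\frac1{\xi^2+\bar z^2}\Big),
\]
with $\bar z^2-z^2=-4i\sigma\tau$. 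Using the known transform $\mathcal F^{-1}\big(1/(\xi^2+z^2)\big)=e^{-z|x|}/(2z)$ for $\mathrm{Re}\,z>0$, which is the complex extension of the fact already used in Corollary \ref{C:L_factors_H1H2} and is justified by analytic continuation in $z$ or by direct integration, we get
\[
K(x)=\frac{1}{-4i\sigma\tau}\Big(\frac{e^{-z|x|}}{2z}-\frac{e^{-\bar z|x|}}{2\bar z}\Big)=\frac{-1}{4\sigma\tau}\,\mathrm{Im}\Big(\frac{e^{-z|x|}}{z}\Big)\cdot\frac{2i}{2i}.
\]
More precisely, $\frac{w-\bar w}{-4i\sigma\tau}=\frac{2i\,\mathrm{Im}\, w}{-4i\sigma\tau}=-\frac{\mathrm{Im}\, w}{2\sigma\tau}$ with $w=e^{-z|x|}/(2z)$.

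The last step is to expand
\[
\frac{e^{-z|x|}}{z}=\frac{\bar z}{|z|^2}\,e^{-\tau|x|}e^{-i\sigma|x|},
\]
with $|z|^2=\sigma^2+\tau^2$. Its imaginary part is
\[
\frac{e^{-\tau|x|}}{\sigma^2+\tau^2}\big(-\tau\sin(\sigma|x|)-\sigma\cos(\sigma|x|)\big).
\]
Multiplying by $-1/(4\sigma\tau)$ gives
\[
\frac{e^{-\tau|x|}}{4\sigma(\sigma^2+\tau^2)}\big(\sin(\sigma|x|)+\tfrac{\sigma}{\tau}\cos(\sigma|x|)\big),
\]
and $\sin(\sigma|x|)=\sin(\sigma x)\,\mathrm{sign}(x)$ while $\cos(\sigma|x|)=\cos(\sigma x)$, which is \eqref{nonpf2}.

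The main obstacle is bookkeeping rather than ideas: keeping the $2\pi$ and sign conventions consistent, and correctly tracking the factors of $i$ and the constant $\bar z^2-z^2$. I would double-check the result by evaluating $\int K\,dx=\mathcal F K(0)=1/(\sigma^2+\tau^2)^2$ directly from \eqref{nonpf2} as a sanity check.
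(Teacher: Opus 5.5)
Your proof is correct, and it differs from the paper's in how the four roots $\pm\sigma\pm i\tau$ of $\alpha$ are grouped.

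\textbf{The paper's route.} The paper keeps the given real factorization $((\xi-\sigma)^2+\tau^2)((\xi+\sigma)^2+\tau^2)$, so each factor carries a pair of complex-conjugate roots. Its partial fractions therefore have linear numerators $\xi\pm 2\sigma$. It then needs three ingredients:
\begin{enumerate}
\item[(i)] the modulation rule, where a shift $\xi\mapsto\xi\pm\sigma$ produces a factor $e^{\mp i\sigma x}$;
\item[(ii)] the transform pair for $1/(\xi^2+\tau^2)$;
\item[(iii)] the transform pair for $\xi/(\xi^2+\tau^2)$, obtained by differentiating $e^{-\tau|x|}$.
\end{enumerate}
Step (iii) is really an $L^2$ or distributional identity, since $\xi/(\xi^2+\tau^2)$ is not integrable.

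\textbf{Your route.} You instead pair roots that are negatives of each other, writing $\alpha=(\xi^2+z^2)(\xi^2+\bar z^2)$ with $z=\tau+i\sigma$. The partial fractions are then in $\xi^2$ with constant numerators, and a single transform pair suffices: $\mathcal F^{-1}\bigl(1/(\xi^2+z^2)\bigr)=e^{-z|x|}/(2z)$. For $\mathrm{Re}\,z>0$ this follows from an absolutely convergent integral. Your approach has several advantages:
\begin{enumerate}
\item[(i)] no modulation or derivative step is needed;
\item[(ii)] realness of $K$ is automatic, because the two terms are complex conjugates;
\item[(iii)] evenness of $K$ is automatic, because everything depends on $|x|$;
\item[(iv)] the factor $\mathrm{sign}(x)$ emerges naturally from $\sin(\sigma|x|)$.
\end{enumerate}
The price is some complex bookkeeping, which you carried out correctly. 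I checked $\bar z^2-z^2=-4i\sigma\tau$, the imaginary part of $\bar z\,e^{-i\sigma|x|}$, and your sanity check $\int K\,dx=(\sigma^2+\tau^2)^{-2}$, which does come out.

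\textbf{Two small points.} Your caveat that one must take $\tau>0$ is apt: formula \eqref{nonpf2} is only valid, indeed only meaningful, for $\tau>0$. The paper's proof makes the same assumption implicitly through $\mathcal F(e^{-\tau|x|})=2\tau/(\xi^2+\tau^2)$. The stray factor $\frac{2i}{2i}$ in your displayed line is harmless, but it should be deleted.
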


\begin{proof} First note that 
	\begin{equation}
    \label{split}
		\frac{1}{((\xi-\sigma)^2+\tau^2)((\xi+\sigma)^2+\tau^2)}=\frac1{4\sigma(\sigma^2+\tau^2)}\left(\frac{\xi+2\sigma}{(\xi+\sigma)^2+\tau^2}-\frac{\xi-2\sigma}{(\xi-\sigma)^2+\tau^2}\right)
	\end{equation}
Write
	\begin{align*}
	\frac{\xi+2\sigma}{(\xi+\sigma)^2+\tau^2}&=\frac{\xi+\sigma}{(\xi+\sigma)^2+\tau^2}+\frac{\sigma}{(\xi+\sigma)^2+\tau^2}
	\end{align*}
Using the fact that $\mathcal{F}(e^{-\tau|x|})=\frac{2\tau}{\xi^2+\tau^2}$,  
	\[
	\mathcal{F}^{-1}\left(\frac{\sigma}{(\xi+\sigma)^2+\tau^2}\right)=\frac{\sigma}{2\tau}e^{-i\sigma x}e^{-\tau|x|}
	\]
and
	\[
	\mathcal{F}^{-1}\left(\frac{\xi}{\xi^2+\tau^2}\right)=-\frac{i}{2\tau}\frac{d}{dx}e^{-\tau|x|}=\frac12ie^{-\tau|x|}\,\mathrm{sign}(x)
	\]
so
	\[
	\mathcal{F}^{-1}\left(\frac{\xi+\sigma}{(\xi+\sigma)^2+\tau^2}\right)=\frac12ie^{-i\sigma x}e^{-\tau|x|}\,\mathrm{sign}(x)
	\]
and thus
	\[
	\mathcal{F}^{-1}\left(\frac{\xi+2\sigma}{(\xi+\sigma)^2+\tau^2}\right)=\frac12e^{-i\sigma x}e^{-\tau|x|}\left(i\,\mathrm{sign}(x)+\frac \sigma\tau\right).
	\]
Similarly,
	\[
	\mathcal{F}^{-1}\left(\frac{\xi-2\sigma}{(\xi-\sigma)^2+\tau^2}\right)=\frac12e^{i\sigma x}e^{-\tau|x|}\left(i\,\mathrm{sign}(x)-\frac{\sigma}{\tau}\right).
	\]
Substituting the last two equations into \eqref{split} and taking the inverse Fourier transform gives \eqref{nonpf2}.
\end{proof}

%\begin{rem} Since the kernel $K(x)$ in Lemma \ref{L:nonPF2kernel} is clearly non-positive, it is not strongly $PF(2)$. Together with Lemma \ref{L:convolution_exponential_PF2}, this implies that when $\alpha(\xi)=\xi^4+b\xi^2+c$ satisfies \eqref{E:L},  the corresponding kernel $K$ is strongly $PF(2)$ if and only if $\alpha$ factors as $(\xi^2+a_1^2)(\xi^2+a_2^2)$ for some nonzero  real numbers $a_1$ and $a_2$. An interesting question is whether this property holds, in general, for symbols $\alpha(\xi)$ satisfying \eqref{E:L} that are polynomial in $\xi^2$.
%\end{rem}

\begin{rem} Since the kernel $K(x)$ in Lemma \ref{L:nonPF2kernel} is clearly non-positive, it is not strongly $PF(2)$. Together with Lemma \ref{L:convolution_exponential_PF2}, this implies that when $\alpha(\xi)=\xi^4+b\xi^2+c$ satisfies \eqref{E:L},  the corresponding kernel $K$ is strongly $PF(2)$ if and only if $\alpha$ factors as $(\xi^2+a_1^2)(\xi^2+a_2^2)$ for some nonzero  real numbers $a_1$ and $a_2$.  More generally, when  $\alpha(\xi)$ is of the form $p(\xi^2)$, where $p$ is a polynomial, Lemma \ref{L:convolution_exponential_PF2} implies that if the roots of $p$ all lie on the negative real axis, then $K=\mathcal{F}^{-1}[1/\alpha(\xi)]$ is strongly $PF(2)$.  An interesting question is whether the converse is also true in general: if $p$ has roots which do not lie on the negative real axis, then is $K=\mathcal{F}^{-1}[1/p(\xi^2)]$ not in $PF(2)$? The classical work of Schoenberg characterizing totally positive functions in terms of the inverse Laplace transforms of their reciprocals (cf.\ Theorem 1 of \cite{B:schoenberg}) may be relevant here.
\end{rem}

\begin{comment}
% We don't make use of this fact anywhere, so I think we can leave it out.
Interestingly, however, 
	\[
	K(x)K''(x)-K'(x)^2=-\frac{e^{-2b|x|}}{16b^2(a^2+b^2)}<0
	\]
so $K$ is log-concave on intervals on which it is positive. 
\end{comment}

\vskip 10pt

 We conclude this section with the following result, which establishes the existence of positive, even solutions of \eqref{E:traveling} in the case that $\alpha$ factors as in Corollary \ref{C:L_factors_H1H2}. 

\begin{thm}\label{T:existence_positive_even} Suppose  $\mathcal{L}$ has symbol $\alpha(\xi)=\prod_{i=1}^m(\xi^2+a_i^2)$ where $a_i\neq0$ for $1\leq i\leq m$, and let $f$ satisfy Assumption \ref{A:f_assumption}. Then 
\begin{enumerate}[(a)]
\item there exists a positive solution $\vp$ of $\mathcal{L}\vp=f(\vp)$, and
\item any positive solution $\vp$ of $\mathcal{L}\vp=f(\vp)$ is a translation of some even function. 
\end{enumerate}
\end{thm}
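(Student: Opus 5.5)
Part (a) will come from the existence result plus the positivity of the Green's function. With $\alpha(\xi)=\prod_{i=1}^m(\xi^2+a_i^2)$, Assumption \ref{A:alpha_assumption} holds with $s=m$, so Theorem \ref{T:existence} gives a nontrivial solution $\vp\in H^m(\mathbb R)$. The plan is to arrange for this solution to be nonnegative and then upgrade to strict positivity.

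The natural route is through the variational construction. Since $f$ is not assumed odd, $\vp\mapsto|\vp|$ does not obviously preserve the Nehari problem. I would therefore first replace $f$ by $\tilde f(r)=f(r)$ for $r\ge0$ and $\tilde f(r)=0$ for $r<0$; one checks (F1)--(F3) still hold, or at least enough of them for the Nehari minimization of \cite{B:esfahani_levandosky2021} to go through. This yields a nontrivial solution of $\mathcal L\vp=\tilde f(\vp)$. Then $\vp=K*\tilde f(\vp)$ with $K=\mathcal F^{-1}(1/\alpha)$. By Corollary \ref{C:L_factors_H1H2}'s formula, $K$ is a positive multiple of $e^{-a_1|x|}*\cdots*e^{-a_m|x|}$, so $K>0$ everywhere. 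Since $\tilde f(\vp)\ge0$ and is not identically zero (otherwise $\vp=0$), we get $\vp(x)>0$ for every $x$. Hence $\tilde f(\vp)=f(\vp)$, and $\vp$ is a positive solution of the original equation. The main technical check here is that the modified nonlinearity fits the existence proof, since (F2) with equality-type behavior at negative $r$ is trivially fine ($0\ge0$).

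For (b) I would use the method of moving planes in integral form, as in Chen--Li--Ou, for $\vp=K*f(\vp)$. First, positivity of $\vp$ and $\vp\in H^m$ give $\vp\to0$ at infinity. For $\lambda\in\mathbb R$, set $\vp_\lambda(x)=\vp(2\lambda-x)$ and $w_\lambda=\vp_\lambda-\vp$ on $\Sigma_\lambda=\{x<\lambda\}$. Because $K$ is even, a computation gives
\[
w_\lambda(x)=\int_{\Sigma_\lambda}\bigl(K(x-y)-K(2\lambda-x-y)\bigr)\bigl(f(\vp_\lambda(y))-f(\vp(y))\bigr)\,dy .
\]
The kernel difference is positive for $x,y<\lambda$ because $K$ is strictly decreasing in $|x|$. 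That monotonicity holds for each factor $e^{-a|x|}$, and convolutions of even, decreasing-in-$|x|$ functions stay so; strictness follows from the log-concavity noted in the Remark after Lemma \ref{L:convolution_exponential_PF2}.

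On the set $\Sigma_\lambda^-=\{w_\lambda<0\}$, I would bound $|w_\lambda|$ using the mean value theorem and (F3). This gives $|f(\vp_\lambda)-f(\vp)|\le C(\vp^{q_1-1}+\vp^{q_2-1})|w_\lambda|$, and then an $L^r$ estimate of the form
\[
\|w_\lambda\|_{L^r(\Sigma_\lambda^-)}\le C\|K\|_{L^1}\,\|f'\text{-weight}\|_{L^\infty(\Sigma_\lambda^-)}\|w_\lambda\|_{L^r(\Sigma_\lambda^-)}.
\]
For $\lambda$ very negative the weight is small since $\vp\to0$, so $\Sigma_\lambda^-$ is empty. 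Next, define $\lambda_0=\sup\{\lambda:w_\mu\ge0\ \forall\mu\le\lambda\}$, which is finite because $\vp$ decays. I would show $w_{\lambda_0}\equiv0$. If not, the integral representation forces $w_{\lambda_0}>0$ on $\Sigma_{\lambda_0}$. A compactness argument then shows the measure of $\Sigma_\lambda^-$ stays small for $\lambda$ slightly beyond $\lambda_0$; since this set lies where $\vp$ is small or in a bounded region of small measure, the smallness estimate applies again and contradicts maximality. Hence $\vp$ is symmetric about $\lambda_0$, i.e.\ a translate of an even function.

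The main obstacle is this final step of pushing past $\lambda_0$. The smallness must come from a small measure of $\Sigma^-_\lambda$ near $\lambda_0$ combined with the decay at $-\infty$, so I would use a Hölder step with $\|f'(\vp)\|_{L^{t}(\Sigma_\lambda^-)}$ rather than the $L^\infty$ norm.
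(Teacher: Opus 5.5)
Your proposal is correct and takes essentially the paper's route. For (a) you make the nonlinearity nonnegative, invoke the Nehari existence result, and use $K>0$ in $\vp=K*\tilde f(\vp)$ to get strict positivity; the paper uses $|f|$ where you truncate to $f\chi_{\{r\ge 0\}}$, which if anything satisfies (F2) more cleanly. For (b) you run the same Chen--Li--Ou integral moving-plane argument as the paper, and the differences are cosmetic: you work on the left half-line and close with the standard supremum/compactness/H\"older step near $\lambda_0$, whereas the paper works on $[\lambda,\infty)$, gets the smallness past $\lambda_0$ by dominated convergence, and finishes by playing the regimes $\lambda\le -N$ and $\lambda\ge M$ against each other.
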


\begin{proof} Existence of a solution of $\mathcal{L}\vp=|f(\vp)|$ in $H^m(\mathbb R)$ was shown in \cite{B:esfahani_levandosky2021}. It follows that $\vp$ is a classical solution in $C^{2m}(\mathbb R)$, and by the stable manifold theorem, $\vp$ decays exponentially to zero as $|x|\to\infty$. Thus $\vp$ satisfies $\vp=K*|f(\vp)|$ where $K(x)=\mathcal{F}^{-1}(1/\alpha(\xi))$. Since $K$ is positive, even and decreasing in $|x|$ (being a convolution of such functions), $\vp$ is positive on $\mathbb R$, and by (F1) is therefore a solution of $\mathcal{L}\vp=f(\vp)$. This establishes (a).

To prove (b), we follow the argument in Chen, Li and Ou \cite{B:chen_li_ou}. For $\lambda\in\mathbb R$ define $\Omega_\lambda=[\lambda,\infty)$, $x_\lambda=2\lambda-x$ and $u_\lambda(x)=u(x_\lambda)$. The result will follow by showing $u_\lambda=u$ on $\Omega_\lambda$ for some $\lambda\in\mathbb R$. Since $K$ is even, it follows that
\begin{equation}\label{E:uu_lambda}
\begin{split}
	u_\lambda(x)-u(x)&=\int_{\Omega_\lambda}\left(K(x-y)-K(x_\lambda-y)\right)(f(u_\lambda(y))-f(u(y)))\,dy\\
	&=\int_{\Omega_\lambda}\tilde{K}_\lambda(x,y)(f(u_\lambda(y))-f(u(y)))\,dy,
\end{split}
\end{equation}
where $\tilde{K}_\lambda(x,y)=K(x-y)-K(x_\lambda-y)$. Note that for $x,y\in\Omega_\lambda$ we have $|x-y|\leq|x_\lambda-y|$, so $K(x-y)\geq K(x_\lambda-y)>0$ and thus $\tilde{K}_\lambda(x,y)\geq0$.  Next define $\Omega_\lambda^-=\{x\in\Omega_\lambda: u(x)<u_\lambda(x)\}$ and $\Omega_\lambda^+=\{x\in\Omega_\lambda: u(x)\geq u_\lambda(x)\}$. Then for any $x\in\Omega_\lambda^-$ we have 
	\begin{align*}
	|u_\lambda(x)-u(x)|&=\int_{\Omega_\lambda^-}\tilde{K}(x,y)(f(u_\lambda(y))-f(u(y)))\,dy
	+\int_{\Omega_\lambda^+}\tilde{K}(x,y)(f(u_\lambda(y))-f(u(y)))\,dy\\
	&\leq\int_{\Omega_\lambda^-}\tilde{K}(x,y)(f(u_\lambda(y))-f(u(y)))\,dy\\
	&\leq\int_{\Omega_\lambda^-}K(x-y)(f(u_\lambda(y))-f(u(y)))\,dy\\
	&\leq C\int_{\Omega_\lambda^-}K(x-y)(|u_\lambda(y)|^{q_1-1}+|u_\lambda(y)|^{q_2-1})|u_\lambda(y)-u(y)|\,dy.
	\end{align*}
It then follows from Young's convolution inequality that
	\begin{align*}
	\|u_\lambda-u\|_{L^2(\Omega_\lambda^-)}&\leq C\|K\|_{L^2(\Omega_\lambda^-)}\|(|u_\lambda|^{q_1-1}+|u_\lambda|^{q_2-1})(u_\lambda-u)\|_{L^1(\Omega_\lambda^-)}\\
	&\leq C \||u_\lambda|^{q_1-1}+|u_\lambda|^{q_2-1}\|_{L^2(\Omega_\lambda^-)}\|u_\lambda-u\|_{L^2(\Omega_\lambda^-)}.
	\end{align*}
Since
	\[
	\||u_\lambda|^{q_1-1}+|u_\lambda|^{q_2-1}\|_{L^2(\Omega_\lambda^-)}
	\leq\left(\int_{-\infty}^\lambda(|u(y)|^{q_1-1}+|u(y)|^{q_2-1})^2\,dy\right)^{1/2}\to0
	\]
as $\lambda\to-\infty$, it follows that $\Omega_\lambda^-$ has measure zero for $\lambda<0$ sufficiently small. By \eqref{E:uu_lambda} this implies there exists $N>0$ such that 
\begin{equation}\label{E:relation1}
u_\lambda(x)\geq u(x)\text{ for all }x\geq\lambda
\end{equation} 
for all $\lambda\leq -N$. By the same reasoning, there exists $M>0$ such that 
\begin{equation}\label{E:relation2}
u_\lambda(x)\geq u(x)\text{ for all }x\geq\lambda
\end{equation} 
for all $\lambda\geq M$. Now suppose for some $\lambda_0\in[-N,M]$ we have $u(x)\geq u_{\lambda_0}(x)$ but $u(x)\not\equiv u_{\lambda_0}(x)$ for $x\geq\lambda_0$. By \eqref{E:uu_lambda} we have $u(x)> u_{\lambda_0}(x)$ for $x>\lambda_0$, so $\overline{\Omega_{\lambda_0}^-}$ has measure zero and therefore so does its reflection across $x=\lambda_0$, $\{y\leq\lambda_0:u(y)\leq u_{\lambda_0}(y)\}$. Define 
	\[
	(\Omega_\lambda^-)^*=\{x_\lambda:x\in \Omega_\lambda^-\}=\{y\leq\lambda:u_\lambda(y)<u(y)\}
	\]
to be the reflection of $\Omega_\lambda^-$ about $x=\lambda$. Then 
	\[
	 \||u_\lambda|^{q_1-1}+|u_\lambda|^{q_2-1}\|_{L^2(\Omega_\lambda^-)}=
	 \||u|^{q_1-1}+|u|^{q_2-1}\|_{L^2((\Omega_\lambda^-)^*)}.
	\]
For $\lambda>\lambda_0$, set $g_\lambda(y)=\chi_{(\Omega_\lambda^-)^*}(y)$ and suppose $y\in\{y\leq\lambda_0:u(y)\leq u_{\lambda_0}(y)\}^c$. Then either $y>\lambda_0$ in which case $y>\lambda$ for $\lambda$ sufficiently close to $\lambda_0$, or $y\leq\lambda_0$ and $u(y)> u_{\lambda_0}(y)$ in which case $u(y)- u_{\lambda}(y)>0$ for  $\lambda$ sufficiently close to $\lambda_0$. In either case we have 
$g_\lambda(y)=0$ for $\lambda$ sufficiently close to $\lambda_0$.  Hence $g_\lambda\to0$ a.e. on $\mathbb R$, so by the Dominated Convergence Theorem it follows that 
	\[
	 \||u|^{q_1-1}+|u|^{q_2-1}\|_{L^2((\Omega_\lambda^-)^*)}\to0
	\]
as $\lambda\to\lambda_0^+$. By the inequalities above it again follows that $\Omega_\lambda^-$ has measure zero for all $\lambda>\lambda_0$ sufficiently close to $\lambda_0$. That is, there exists $\epsilon>0$ such that \eqref{E:relation1} holds for $\lambda\in[\lambda_0,\lambda_0+\epsilon)$. By continuity \eqref{E:relation1} then also holds when $\lambda=\lambda+\epsilon$. Thus either $u_\lambda= u$ on $\Omega_\lambda$ for some $\lambda<M$, or \eqref{E:relation1} holds for all $\lambda\in[-N,M]$ and in particular at $\lambda=M$, which combined with \eqref{E:relation2} gives $u_M= u$ on $\Omega_M$.
\end{proof}

\vskip 10pt

\section{Two specific examples} 
\label{sec:numerical}

In this section, we apply our main results to equations of the form \eqref{E:evolution}, where $\mathcal{M}$ is a differential operator of order four or six. 

\vskip 10pt

\begin{ex} Traveling waves of the 5th order KdV equation,
	\[
	u_t-(u_{xxxx}-bu_{xx})_x+(f(u))_x=0,
	\]
satisfy
\begin{equation}\label{E:ODE4}
	\vp''''-b\vp''+c\vp=f(\vp).
\end{equation}
When $c>0$ and $b>-2\sqrt{c}$ the Fourier multiplier $\alpha(\xi)=\xi^4+b\xi^2+c$ of $\mathcal{L}$ satisfies \eqref{E:L} and therefore for $f$ satisfying Assumption \ref{A:f_assumption} there exist solutions of \eqref{E:ODE4} in $H^2(\mathbb R)$ by Theorem \ref{T:existence}. When $b\geq2\sqrt{c}$, the multiplier $\alpha$ takes the form $(\xi^2+a_1^2)(\xi^2+a_2^2)$ where $a_1=\sqrt{\frac{b+\sqrt{b^2-4c}}{2}}$ and $a_2=\sqrt{\frac{b-\sqrt{b^2-4c}}{2}}$, while for $-2\sqrt{c}<b<2\sqrt{c}$ it takes the form $((\xi-\sigma)^2+\tau^2)((\xi+\sigma)^2+\tau^2)$ where $\sigma=\sqrt{\frac{2\sqrt{c}-b}{4}}$ and $\tau=\sqrt{\frac{2\sqrt{c}+b}{4}}$. Thus, by Lemma \ref{L:convolution_exponential_PF2} the kernel $K$ defined by \eqref{E:K_definition} is strongly $PF(2)$ when $b\geq2\sqrt{c}$, and by Lemma \ref{L:nonPF2kernel}, $K$ is oscillatory when $-2\sqrt{c}<b<2\sqrt{c}$. Figure \ref{F:kernels} shows the kernel $K$ for $c=1$, and $b=-1.5,0,3$.
\begin{figure}
\begin{center}
\scalebox{0.4}{\includegraphics{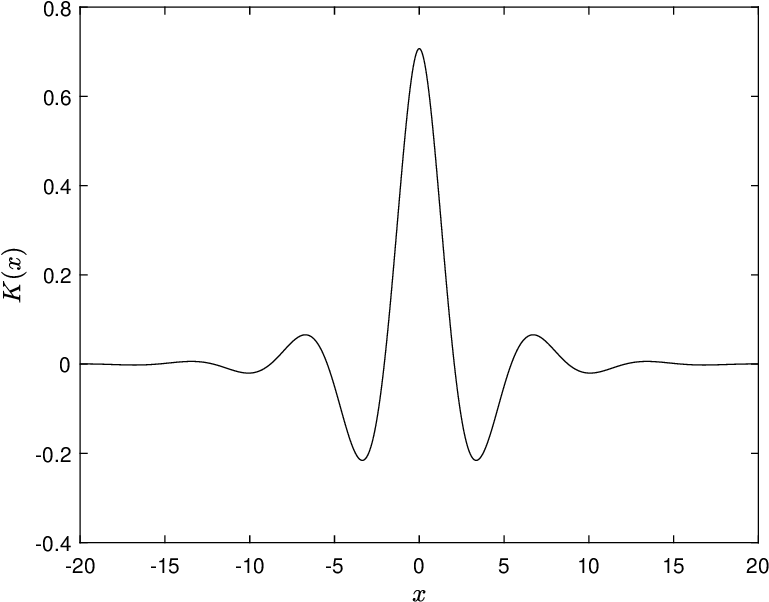}
\quad\includegraphics{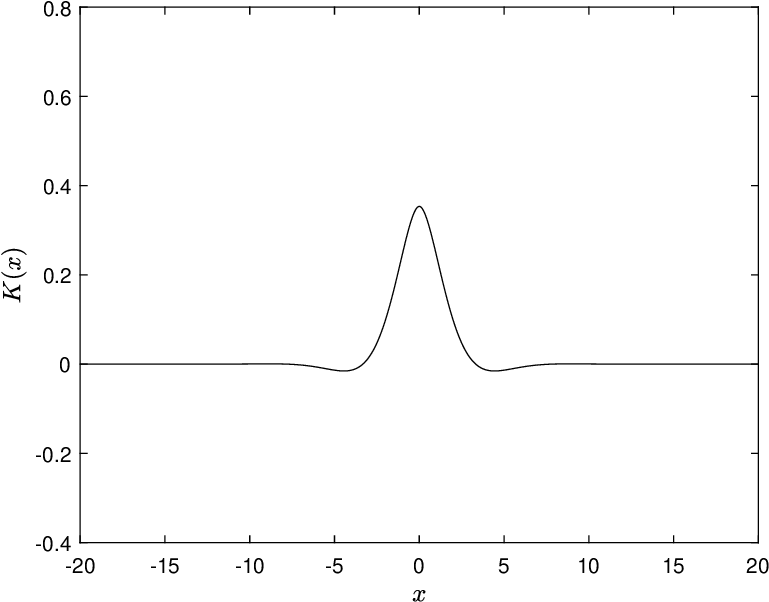}\quad\includegraphics{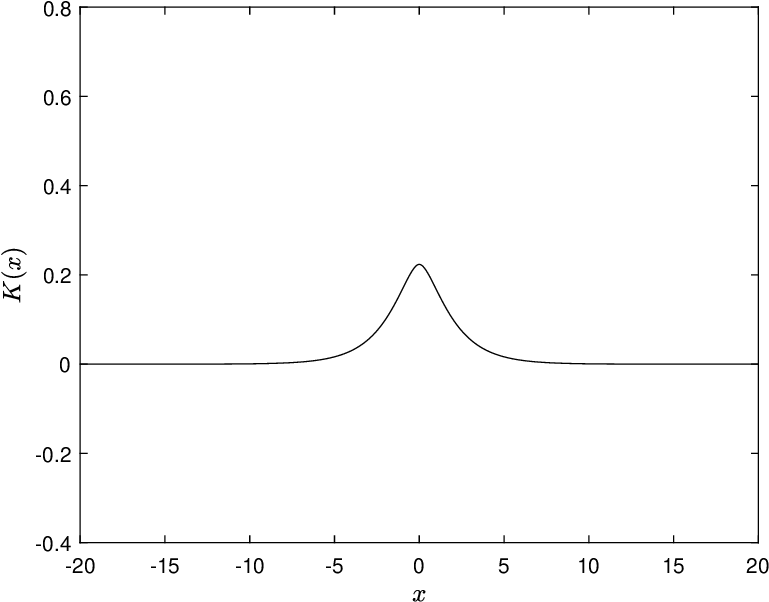}
}\caption{Kernels $K(x)=\mathcal{F}^{-1}\left(\frac1{\xi^4+b\xi^2+c}\right)$ with $c=1$, $b=-1.5,0,3$.}\label{F:kernels}
\end{center}
\end{figure}
\begin{figure}
\begin{center}
\scalebox{0.4}{\includegraphics{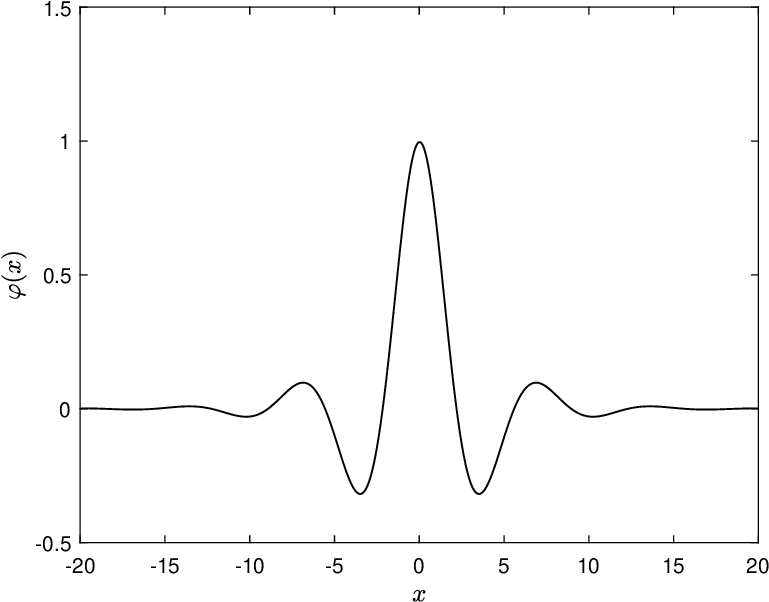}
\quad\includegraphics{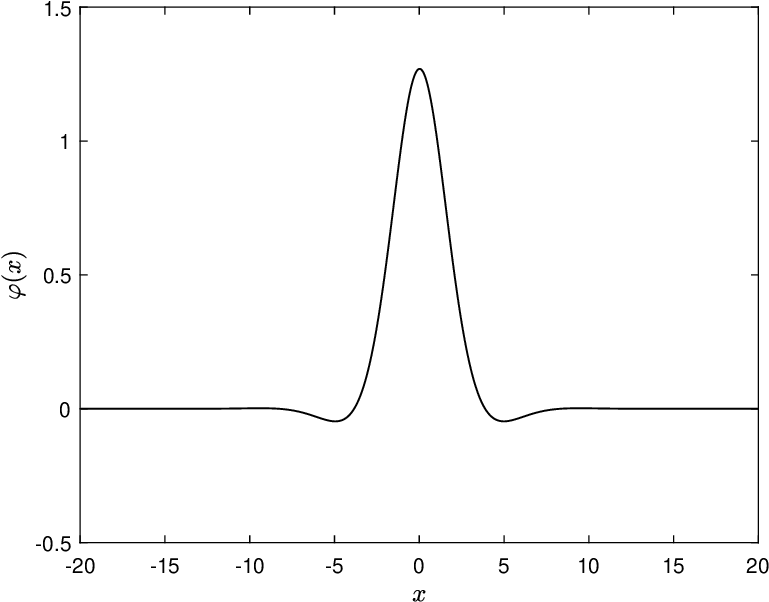}\quad\includegraphics{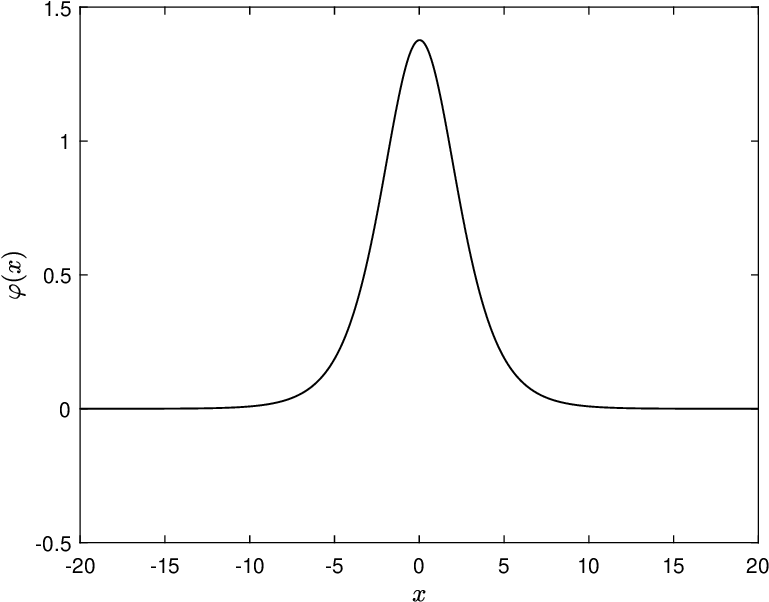}
}\caption{Solutions of $\vp''''-b\vp''+c\vp=\vp^3$ with $c=1$, $b=-1.5,0,3$.}\label{F:kdv5}
\end{center}
\end{figure}
Figure \ref{F:kdv5} shows numerical approximations of solutions of \eqref{E:ODE4} with nonlinear term $f(\vp)=\vp^3$. As one would expect, they bear a close resemblance to the corresponding kernels in Figure \ref{F:kernels}. Theorem \ref{T:existence_positive_even} implies that when $b\geq2\sqrt{c}$ there exists a positive, even solution of \eqref{E:ODE4}. For each such $\vp$, Theorem \ref{T:positivity} implies the linearized operator $\mathcal{H}$ satisfies the spectral properties (H1) and (H2). The stability or instability of $\vp$ is therefore determined by the sign of $\frac{d}{dc}\|\vp\|_{L^2}^2$. Numerical calculations were carried out in \cite{B:esfahani_levandosky2021,B:levandosky2007} to compute $\frac{d}{dc}\|\vp\|_{L^2}^2$ for various nonlinear terms satisfying (F1), (F2) and (F3). In Figure \ref{F:kdv5_domains}, the darker region contains parameter pairs $(b,c)$ for which Theorem  \ref{T:positivity} applies. To our knowledge, it is not known whether the linearized operator $\mathcal{H}$ satisfies (H1) and (H2) in the lighter shaded region.
\begin{figure}
\begin{center}
\scalebox{0.6}{\includegraphics{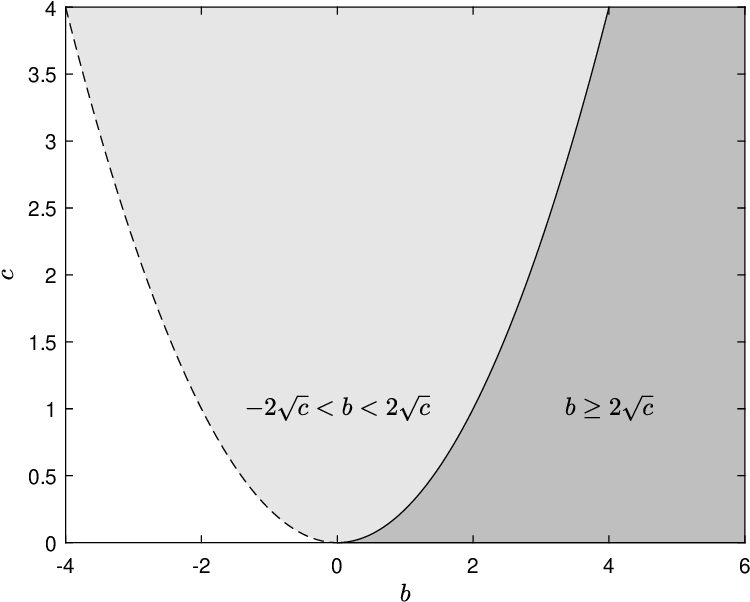}}
\end{center}
\caption{In both shaded regions, solutions of \eqref{E:ODE4} exist. The hypotheses of Theorem \ref{T:positivity} hold in the darker shaded region, where the kernel $K$ is strongly $PF(2)$.}
\label{F:kdv5_domains}
\end{figure}
\end{ex}

\begin{ex} We conclude by considering the following 7th order KdV equation \cite{B:pomeau}
	\begin{equation}\label{E:kdv7}
	u_t-(-u_{xxxxxx}+au_{xxxx}-bu_{xx})_x+(f(u))_x=0,
	\end{equation}
for which the traveling wave equation \eqref{E:traveling} becomes
	\begin{equation}\label{E:ODE6}
	-\vp''''''+a\vp''''-b\vp''+c\vp=f(\vp).
	\end{equation}
Observe that the multiplier  $\alpha(\xi)=\xi^6+a\xi^4+b\xi^2+c$ of the operator $\mathcal{L}$ factors as in Corollary \ref{C:L_factors_H1H2} if and only if it has only purely imaginary roots. Equivalently, the polynomial $p(x)=x^3+ax^2+bx+c$ must have three negative real roots, counting multiplicity, which occurs provided $c>0$, and $p'$ has real roots $r_\pm=\frac13(-a\pm\sqrt{a^2-3b})$ satisfying $r_-\leq r_+<0$, and $p(r_+)\leq0\leq p(r_-)$. We have $r_+<0$ when $a>0$ and $0< b\leq\frac13a^2$, and we have $p(r_+)\leq0\leq p(r_-)$ when $c>0$ and $h_-(a,b)\leq c\leq h_+(a,b)$, where
	\[
	h_\pm(a,b):=\frac2{27}(a^2-3b)(-a\pm\sqrt{a^2-3b})+\frac19ab.
	\]
Thus the region of parameters for which $\mathcal{L}$ factors as a composition of second order elliptic operators takes the form $F=\{(a,b,c):a>0, b>0, \max\{0,h_-(a,b)\}\leq c\leq h_+(a,b)\}$. Next, we note that $\alpha$ satisfies the ellipticity condition \eqref{E:L} with $s=3$ if and only if $p$ is positive for $x\geq0$. This clearly requires $c>0$. Note that since $p(x)=x(x^2+ax+b)+c$, $p$ is positive for $x\geq0$ if $c>0$, $b\geq0$ and $a\geq-2\sqrt{b}$. On the other hand, if either $b<0$, or $b\geq0$ and $a\leq-2\sqrt{b}$, then $p'$ has real roots $r_\pm$ where $r_+>0$, and therefore $p$ is positive for $x\geq0$ if and only if $p(r_+)>0$, which holds when $c>h_+(a,b)$. Thus $\mathcal{L}$ is elliptic when $(a,b,c)$ lies in the set $E$ of points that satisfy $c>h(a,b)$ where
	\[
	h(a,b)=\left\{
	\begin{array}{cc}
	0 & b\geq0, a\geq-2\sqrt{b}\\
	h_+(a,b) & \text{otherwise}
	\end{array}
	\right.
	\]
By a direct calculation one can show that $F=\{(a,b,c):a>0,b>0,c>0, D(a,b,c)\geq0\}$, where 
	\[
	D(a,b,c):=a^2b^2-4b^3-4a^3c-27c^2+18abc
	\]
is the discriminant of the polynomial $p$. Similarly, $E=\{(a,b,c):D(a,b,c)<0\}\cup\{(a,b,c):a>0,b>0,c>0\}$. Figure \ref{F:domains} shows the surfaces $c=h_\pm(a,b)$ that bound the region $F$. Figure \ref{F:kdv7regions} shows the cross-sections of $E$ and $F$ with $c=1$. The darker region is the cross-section with $F$ and the two regions combined is the cross-section with $E$. Since the sign of the discriminant is invariant under the scaling $(a,b,c)\to(sa,s^2b,s^3c)$, cross sections with other $c>0$ are obtained by applying the mapping $G(a,b)=(c^{1/3}a,c^{2/3}b)$ to the regions in Figure \ref{F:kdv7regions}.

\begin{figure}
\begin{center}
	\scalebox{0.6}{\includegraphics{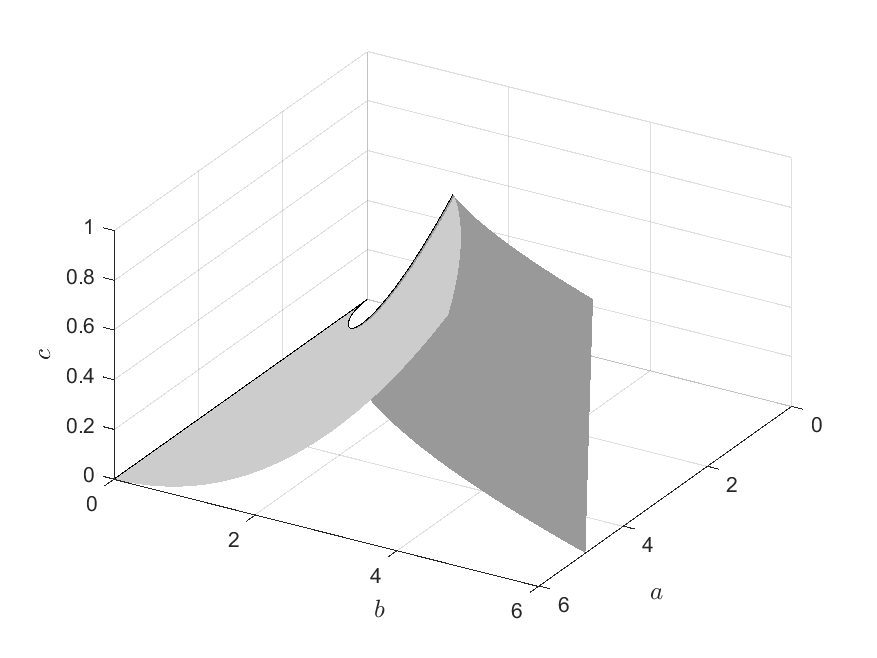}}\qquad
\end{center}
\caption{The symbol $\alpha(\xi)=\xi^6+a\xi^4+b\xi^2+c$ of the operator factors into $(\xi^2+a_1^2)(\xi^2+a_2^2)(\xi^2+a_3^2)$ for real nonzero $a_i$ in the region between the two surfaces.}\label{F:domains}
\end{figure}

\begin{figure}
\begin{center}
	\scalebox{0.6}{\includegraphics{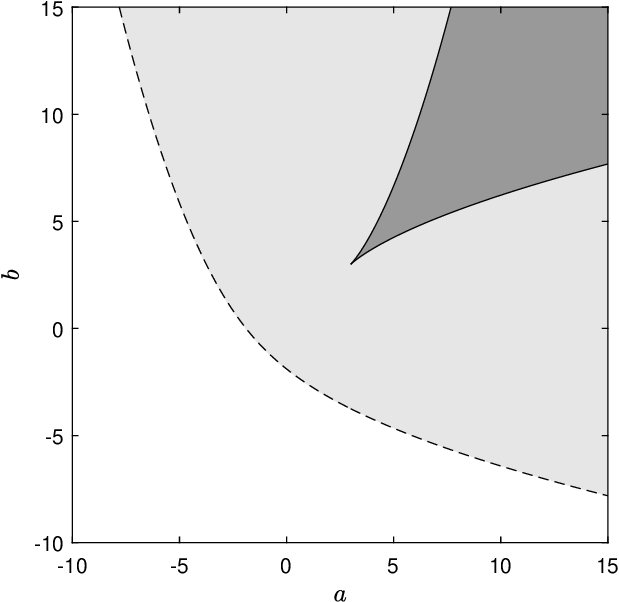}}
\end{center}
\caption{When $c=1$, the symbol $\alpha(\xi)=\xi^6+a\xi^4+b\xi^2+c$ of the operator $\mathcal{L}$ satisfies \eqref{E:L} for $(a,b)$ in the two shaded regions, and factors into $(\xi^2+a_1^2)(\xi^2+a_2^2)(\xi^2+a_3^2)$ for real nonzero $a_i$ in the darker shaded region.}\label{F:kdv7regions}
\end{figure}

As a concrete example, consider $a=6$ and $b=9$. Since $h_-(6,0)=0$ and $h_+(6,9)=4$, the operator $\mathcal{L}$ factors as in Corollary \ref{C:L_factors_H1H2} if $0<c<4$. For $c$ in this range, \eqref{E:ODE6} has positive solutions and the spectral conditions (H1) and (H2) hold.  One may then use the sign of $\frac{d}{dc}\|\vp\|_{L^2}^2$ to determine whether these solutions are stable under the evolution of \eqref{E:kdv7}, with a positive sign implying stability and a negative sign implying instability.  Figure \ref{F:kdv7_stability} shows numerical approximations of $\frac{d}{dc}\|\vp\|_{L^2}^2$ in the case of a pure power nonlinearity $f(\vp)=|\vp|^{p-1}\vp$ with $p=2,4,6,8$. Taking these into account, we conclude that when $p=2$ and $p=4$ traveling waves with speed $c\in(0,4)$ are stable, when $p=8$ they are unstable, and when $p=6$ there is a critical speed $c^*\approx1.2$ such that they are unstable for $c\in(0,c^*)$ and stable for $c\in(c^*,4)$. 

\end{ex}

\begin{figure} [H]
\begin{center}
\scalebox{0.5}{\includegraphics{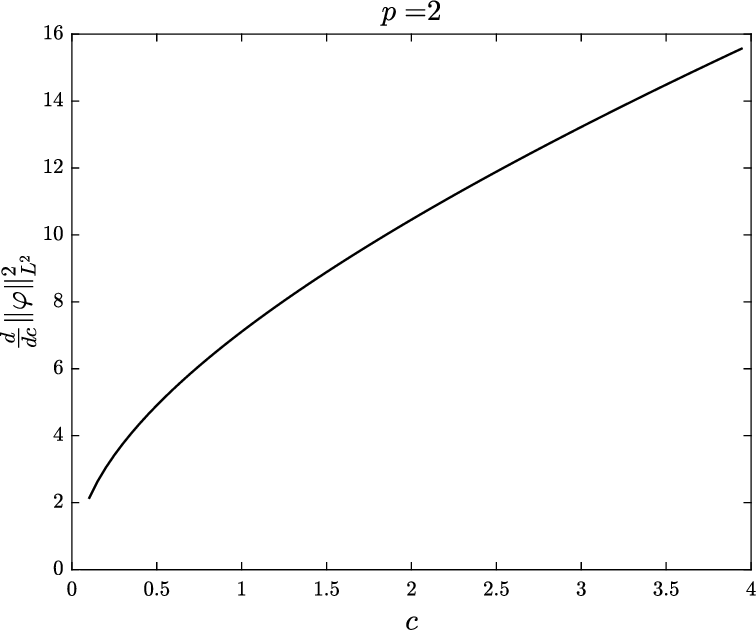}
\quad
\includegraphics{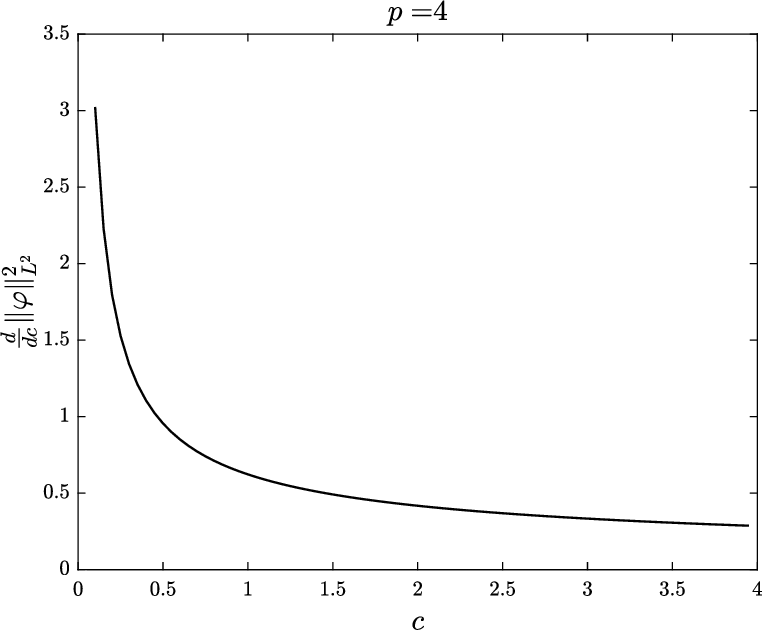}}\\
\scalebox{0.5}{\includegraphics{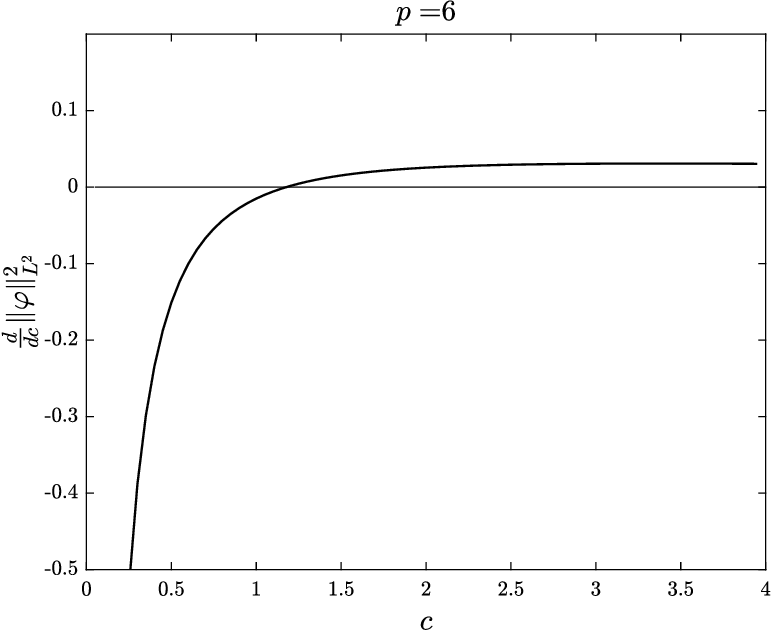}
\quad
\includegraphics{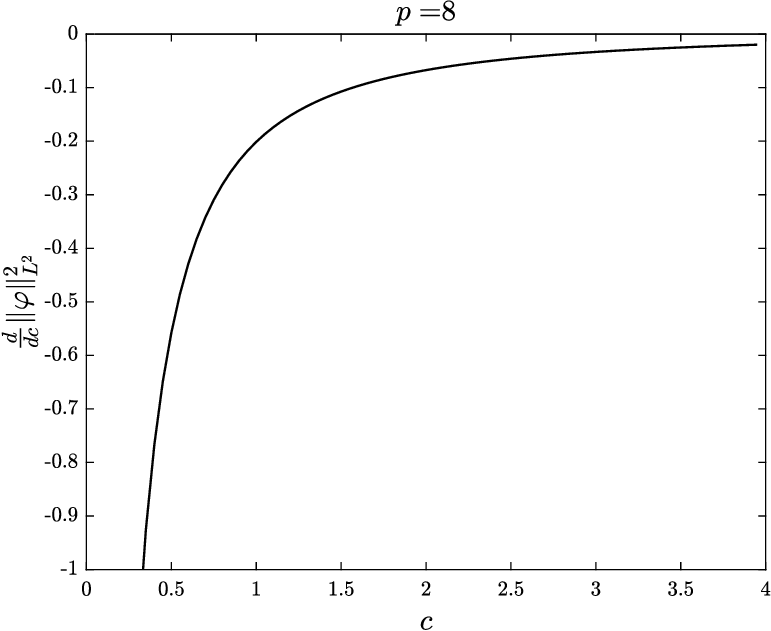}}
\end{center}
\caption{Numerical approximations of $\frac{d}{dc}\|\varphi\|_{L^2}^2$ for solutions of \eqref{E:ODE6} with $a=6$, $b=9$ and $f(\vp)=|\vp|^{p-1}\vp$ with $p=2,4,6,8$.}\label{F:kdv7_stability}
\end{figure}

\bigskip
  \footnotesize

  J.~Albert, \textsc{Department of Mathematics, University of Oklahoma,
    Norman OK 73019}\par\nopagebreak
  \textit{E-mail address;} \texttt{jalbert@ou.edu}

  \medskip

  S.~Levandosky, \textsc{Mathematics and Computer Science Department, College of the Holy Cross, Worcester, MA 01610}\par\nopagebreak
  \textit{E-mail address:} \texttt{slevando@holycross.edu}
  
  \end{document}